\numberwithin{equation}{section}
\theoremstyle{definition}
\newtheorem{theorem}{Theorem}[section]
\newtheorem{corollary}[theorem]{Corollary}
\newtheorem{lemma}[theorem]{Lemma}
\newtheorem{proposition}[theorem]{Proposition}
\newtheorem{remark}[theorem]{Remark}
\newtheorem{thm}{Theorem} 
\newtheorem{hypothesis}{Hypothesis}
\DeclareMathOperator\gk{GK.dim}
\DeclareMathOperator\Span{Span}
\newcommand\bs{\backslash}
\newcommand\inv{^{-1}}
\newcommand\iso{\cong}
\newcommand\kk{\Bbbk}
\newcommand\wh{\widehat}
\newcommand\cA{\mathcal A}
\newcommand\cM{\mathcal M}
\newcommand\cO{\mathcal O}
\newcommand\cS{\mathcal S}
\newcommand{\qp}{\cO_q(\kk^2)}
\newcommand{\qpp}{\cO_p(\kk^2)}
\newcommand\bp{\mathbf p}
\newcommand\bq{\mathbf q}
\newcommand\bx{\mathbf x}
\newcommand\by{\mathbf y}
\newcommand{\qnp}{\cO_{\bp}(\kk^n)}
\newcommand{\qnq}{\cO_{\bq}(\kk^n)}
\newcommand{\qmq}{\cO_{\bq}(\kk^m)}
\newcommand{\qma}{\cO_{\lambda,\bq}(\cM_n(\kk))}
\newcommand{\qmamu}{\cO_{\mu,\bq}(\cM_n(\kk))}
\newcommand{\qmamum}{\cO_{\mu,\bq}(\cM_m(\kk))}
\newcommand{\pma}{\cO_{\lambda,\bp}(\cM_n(\kk))}
\newcommand{\wa}{A_1^q(\kk)}
\newcommand{\wap}{A_1^p(\kk)}
\newcommand{\hwa}{H(\wa)}
\newcommand{\hwap}{H(\wap)}
\newcommand{\wam}{A_m^{\bq,\mu}(\kk)}
\newcommand{\wanp}{A_n^{\bp,\gamma}(\kk)}
\newcommand{\hw}{H_n^{\bq,\mu}}
\newcommand{\hwm}{H_m^{\bq,\mu}}
\newcommand{\hwp}{H_n^{\bp,\gamma}}
\newcommand\ep{\varepsilon}
\begin{document}

\title{The isomorphism problem for quantum affine spaces,
homogenized quantized Weyl algebras, and quantum matrix algebras}

\author{Jason Gaddis}
\address{Department of Mathematics and Statistics, 
P.O. Box 7388, Wake Forest University, Winston-Salem, NC 27109} 
\email{gaddisjd@wfu.edu}



\begin{abstract}
Bell and Zhang have shown that if 
$A$ and $B$ are two connected graded algebras finitely generated 
in degree one that are isomorphic as ungraded algebras,
then they are isomorphic as graded algebras. 
We exploit this result to solve the isomorphism problem
in the cases of quantum affine spaces, quantum matrix algebras,
and homogenized multiparameter quantized Weyl algebras.
Our result involves determining the degree one normal elements,
factoring out, and then repeating.
This creates an iterative process that allows one to
determine relationships between relative parameters.
\end{abstract}

\maketitle

\section{Introduction}

Throughout, $\kk$ is a field and all algebras are $\kk$-algebras. 
All isomorphisms should be read as `isomorphisms as $\kk$-algebras'.
Our primary source for all definitions is \cite{BG}.

\begin{hypothesis}
\label{hyp.cga}
$A$ is a connected graded algebra finitely
generated over $\kk$ in degree $1$.
\end{hypothesis}

Let $R$ and $S$ be algebras satisfying Hypothesis \ref{hyp.cga}
with bases $\{x_i\}$ and $\{y_i\}$, respectively.
Then $R$ and $S$ are isomorphic as graded algebras
if there exists an algebra isomorphism $\Phi:R \rightarrow S$
such that $\Phi(x_i) = \sum \alpha_{ij} y_j$, $\alpha_{ij} \in \kk$,
for each $x_i$.
If $a_{ij}\neq 0$, then we say $y_j$ is a {\sf summand} of $\Phi(x_i)$.
Because of the following result,
we will often assume without comment that isomorphisms
between graded algebras are graded isomorphisms.

\begin{thm}[Bell, Zhang {\cite[Theorem 0.1]{BZ}}]
Let $A$ and $B$ be two algebras satisfying Hypothesis \ref{hyp.cga}.
If $A \iso B$ as ungraded algebras, then $A \iso B$ as graded algebras.
\end{thm}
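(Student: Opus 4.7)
Let $\Phi: A \to B$ be an ungraded isomorphism, fix a basis $\{x_i\}$ of $A_1$, and decompose $\Phi(x_i) = c_i + y_i^{(1)} + y_i^{(2)} + \cdots$ with $c_i \in \kk$ and $y_i^{(n)} \in B_n$. Writing $\epsilon_B : B \to \kk$ for the canonical augmentation, the composition $\epsilon_B \circ \Phi : A \to \kk$ is an algebra homomorphism sending $x_i$ to $c_i$, and its kernel is a codimension-one ideal of $A$. The plan has two stages: first reduce to the case where this kernel coincides with $\mathfrak{m}_A = \bigoplus_{n \geq 1} A_n$ (equivalently, $c_i = 0$ for every $i$), then extract a graded isomorphism from the degree-one components of $\Phi$.

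\emph{Reduction.} The natural attempt is to post-compose $\Phi$ with an automorphism $\beta$ of $B$ carrying $\Phi(\mathfrak{m}_A)$ onto $\mathfrak{m}_B$. In the commutative polynomial case $\beta$ is a translation and the step is routine; in general $B$ may have very few automorphisms, so such a $\beta$ need not exist directly. The key observation I would use is that the constants $(c_i)$ are not arbitrary: applying each homogeneous defining relation $r$ of $A$ to $\Phi(x_1), \ldots, \Phi(x_n)$, reducing modulo $\mathfrak{m}_B$, and tracking all graded components shows that $(c_i)$ satisfies rigid polynomial constraints inherited from the relation ideal. A case analysis of the allowed configurations should either exhibit an automorphism of $B$ (or $A$) that absorbs them, or force $c_i = 0$ outright because no other augmentation of $B$ is compatible with the graded structure transported from $A$.

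\emph{Degree-one extraction.} Assume $\Phi(\mathfrak{m}_A) \subseteq \mathfrak{m}_B$, so $c_i = 0$ for all $i$. Define $\phi: A_1 \to B_1$ by $\phi(x_i) = y_i^{(1)}$. For any homogeneous relation $r$ of degree $d$ among the $x_i$, the identity $r(\Phi(x_1), \ldots, \Phi(x_n)) = 0$ in $B$ can be expanded using $\Phi(x_i) = y_i^{(1)} + y_i^{(2)} + \cdots$; its degree-$d$ component is exactly $r(y_1^{(1)}, \ldots, y_n^{(1)}) = 0$. Hence $\phi$ respects all defining relations of $A$ and extends uniquely to a graded algebra homomorphism $A \to B$. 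Running the same construction on $\Phi^{-1}$ supplies a graded map in the opposite direction, and extracting the degree-one part of the identity $\Phi^{-1}\Phi(x_i) = x_i$ shows that the two extensions are mutually inverse on generators, hence are isomorphisms.

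The main obstacle is the reduction stage. This is precisely where the combined hypotheses of connectedness, grading, and generation in degree one must be used in full force: only under these assumptions is the space of augmentations of $B$ controlled tightly enough to be absorbed by the (possibly small) automorphism group of $B$ together with the rigidity of augmentations coming from an $A$-shaped graded source. Once that step is in place, the degree-one extraction and bijectivity check are mechanical.
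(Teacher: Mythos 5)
First, note that the paper does not prove this statement at all: it is imported wholesale from Bell and Zhang \cite[Theorem 0.1]{BZ}, so your proposal can only be measured against their argument. Your degree-one extraction stage is correct as far as it goes: once $\Phi(\mathfrak{m}_A)=\mathfrak{m}_B$, generation in degree one gives $\mathfrak{m}_B^{d}=B_{\geq d}$, the defining relations of $A$ may be taken homogeneous, the degree-$d$ component of $r(\Phi(x_1),\dots,\Phi(x_n))=0$ is $r(y_1^{(1)},\dots,y_n^{(1)})=0$, and the same computation for $\Phi^{-1}$ supplies a two-sided inverse on generators. The genuine gap is the reduction stage, which is where the entire content of the theorem lives and for which you offer a hope rather than an argument. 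There is no reason the relations of $A$ should force $c_i=0$ (for $B=\kk[x]$ every constant is realized by some augmentation), and producing an automorphism $\beta$ of $B$ with $\beta(\Phi(\mathfrak{m}_A))=\mathfrak{m}_B$ is not easier than the theorem itself: a posteriori such a $\beta$ exists precisely because the theorem is true (compose $\Phi$ with a graded isomorphism $B\to A$), so any direct construction of it would have to contain a proof of the theorem. The promised ``case analysis of the allowed configurations'' is therefore not a step you have carried out; it is the statement to be proved.

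Bell and Zhang avoid the reduction entirely, and this is the idea your outline is missing. Set $I=\Phi(\mathfrak{m}_A)$, a codimension-one ideal of $B$ that need not be graded, and let $\epsilon:B\to B/I\iso\kk$. Writing each degree-one generator $x$ as $(x-\epsilon(x))+\epsilon(x)$ with $x-\epsilon(x)\in I$ and expanding products of generators shows $B=B_{\leq n-1}+I^{n}$, hence $\dim_\kk B/I^{n}\leq\dim_\kk B/\mathfrak{m}_B^{n}$ for every $n$; applying this inequality to both $\Phi$ and $\Phi^{-1}$ forces equality, so in fact $B=B_{\leq n-1}\oplus I^{n}$. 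That direct sum decomposition shows the assignment $x\mapsto (x-\epsilon(x))+I^{2}$ annihilates every homogeneous relation and defines a surjective graded homomorphism $B\to\gr_I(B)$, which is an isomorphism by the dimension count; combined with the graded isomorphisms $A\iso\gr_{\mathfrak{m}_A}(A)\iso\gr_I(B)$ induced by $\Phi$, this gives $A\iso B$ as graded algebras. If you want to complete your proof, replace the reduction stage with this associated-graded comparison; as written, your proposal assumes the hard part.
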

 
A square matrix $\bp = (p_{ij}) \in\cM_n(\kk^\times)$ is 
{\sf multiplicatively antisymmetric} if 
$p_{ii} = 1$ and $p_{ij}=p_{ji}\inv$ for all $i \neq j$. 
Let $\cA_n \subset \cM_n(\kk^\times)$ be the subset of 
multiplicatively antisymmetric matrices. 
A matrix $\bq \in \cA_n$ is a {\sf permutation} of $\bp$
if there exists a permutation $\sigma \in \cS_n$
such that $q_{ij}=p_{\sigma(i)\sigma(j)}$ for all $i,j$.

For $\bp \in \cA_n$, the {\sf (multiparameter) quantum affine $n$-space} 
$\qnp$ is defined as the algebra with basis $\{x_i\}$, 
$1 \leq i \leq n$, subject to the relations $x_ix_j=p_{ij}x_jx_i$ 
for all $1 \leq i,j \leq n$. 

\begin{thm}[Theorem \ref{thm.qas}]
$\qnp \iso \qmq$ if and only if $m=n$ and $\bp$ is a permutation of $\bq$.
\end{thm}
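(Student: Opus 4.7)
My plan is to prove the nontrivial direction by induction on $n$. The reverse direction is immediate: given $\sigma \in \cS_n$ with $q_{ij} = p_{\sigma(i)\sigma(j)}$, the assignment $y_i \mapsto x_{\sigma(i)}$ extends to an algebra isomorphism $\qnq \to \qnp$. For the forward direction, the base case $n = 1$ is trivial, since both algebras are $\kk[x]$. For $n > 1$, by the Bell--Zhang theorem I may assume any isomorphism $\Phi : \qnp \to \qmq$ is graded; comparing degree-one components forces $m = n$.

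The first key step is to characterize degree-one normal elements of $\qnp$. A short calculation shows that $z = \sum_i \alpha_i x_i$ is normal exactly when the rows of $\bp$ indexed by the support $S = \{i : \alpha_i \neq 0\}$ are all equal. In that case the relation $p_{ii} = 1$ forces $p_{ii'} = 1$ for all $i, i' \in S$, so the $x_i$ with $i \in S$ pairwise commute; moreover, any invertible linear change of basis on $\Span\{x_i : i \in S\}$ extends to a graded automorphism of $\qnp$, since the commutation scalar of each such $x_i$ with any other generator depends only on its row of $\bp$.

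I apply this to $\Phi(x_n)$, a degree-one normal element of $\qnq$: its support lies in a single equivalence class $C$ of rows of $\bq$. Composing $\Phi$ with an appropriate linear automorphism supported on $\Span\{y_j : j \in C\}$, I may arrange $\Phi(x_n) = \beta y_k$ for some $k \in C$ and $\beta \in \kk^\times$. Because $x_n$ and $y_k$ are normal, $\Phi$ descends to an isomorphism of the quotients, which identify with quantum affine spaces on $n - 1$ generators whose parameter matrices $\bp^{(n)}$ and $\bq^{(k)}$ are obtained by deleting the relevant row and column. The inductive hypothesis yields a bijection $\tau$ between the remaining index sets with $p_{ij} = q_{\tau(i)\tau(j)}$, and I extend by $\sigma(n) = k$ to obtain a candidate $\sigma \in \cS_n$.

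The final task is to verify $p_{ni} = q_{k\sigma(i)}$ for $i < n$. Applying $\Phi$ to $x_n x_i = p_{ni} x_i x_n$ gives $y_k \Phi(x_i) = p_{ni} \Phi(x_i) y_k$; writing $\Phi(x_i) = \sum_j \beta_{ij} y_j$ and comparing coefficients forces $q_{kj} = p_{ni}$ for every $j$ in the support of $\Phi(x_i)$, and since $\sigma(i)$ lies in this support, the required equality drops out. The main obstacle I expect is the degenerate case of repeated rows of $\bp$ or $\bq$: one must carefully justify that the linear change of basis on an equivalence class really extends to a graded automorphism, and confirm that the permutation $\sigma$ produced by the induction is consistent with the normalizing scalars along the last row and column, not merely with the entries of the $(n-1) \times (n-1)$ submatrices.
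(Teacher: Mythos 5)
Your overall strategy --- characterizing the degree-one normal elements of $\qnq$, normalizing $\Phi(x_n)$ to a scalar multiple of a single generator $y_k$ via an automorphism supported on an equivalence class of equal rows, and inducting on the quotients by these normal elements --- is genuinely different from the paper's argument, which works directly with a permutation $\tau$ such that $y_{\tau(i)}$ is a summand of $\Phi(x_i)$ (extracted from the nonvanishing of the coefficient matrix) and then reads off $p_{rs}=q_{\tau(r)\tau(s)}$ from the image of the relation $x_rx_s=p_{rs}x_sx_r$. Your preliminary steps check out: the normality criterion for $\sum\alpha_ix_i$, the fact that a linear change of basis on an equivalence class of equal rows extends to a graded automorphism, and the coefficient computation showing $q_{kj}=p_{ni}$ for every $j$ in the support of $\Phi(x_i)$ are all correct.

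The gap is in the final step, exactly where you flag ``the main obstacle.'' The inductive hypothesis, as you invoke it, only asserts that the deleted matrices $\bp^{(n)}$ and $\bq^{(k)}$ are permutations of one another; it delivers \emph{some} bijection $\tau$ with $p_{ij}=q_{\tau(i)\tau(j)}$, with no control over whether $\tau(i)$ lies in the support of $\Phi(x_i)$. Your verification of $p_{ni}=q_{k\sigma(i)}$ uses precisely that membership, so it does not follow. Concretely, take $n=3$, $p_{12}=1$, $p_{13}=2$, $p_{23}=3$, $\bq=\bp$, $\Phi=\mathrm{id}$, $k=3$: the quotients are commutative polynomial rings in two variables, so both the identity and the transposition of $\{1,2\}$ satisfy the submatrix condition, but the transposition does not extend to a valid $\sigma$ because $q_{31}\neq q_{32}$. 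The fix is to strengthen the statement being proved by induction to: there exists a permutation $\tau$ with $p_{ij}=q_{\tau(i)\tau(j)}$ \emph{and} with $y_{\tau(i)}$ a summand of $\Phi(x_i)$ for each $i$, so that $\tau(i)$ survives into the support one level up. Producing such a transversal through the supports is exactly the content of the paper's Lemma \ref{lem.taudef}, proved there by expanding the nonzero determinant of the coefficient matrix; without that lemma, or the strengthened inductive statement, your argument is incomplete.
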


Mori proved Theorem \ref{thm.qas} when $n=3$ \cite[Example 4.10]{M} 
and this was extended to all $n$ by Vitoria \cite[Lemma 2.3]{V}.
We present a simple, self-contained proof that does not 
rely on the noncommutative projective algebraic geometry associated to $\qnp$.

{\sf (Multiparameter) quantized Weyl algebras} may be regarded 
as $\gamma$-difference operators on $\qnp$. 
Let $\bp \in \cA_n$ and 
$\gamma=(\gamma_1,\hdots,\gamma_n) \in (\kk^\times)^n$. 
Then $\wanp$ is the algebra with basis $\{x_i,y_i\}$,
$1 \leq i \leq n$, subject to the relations
\begin{align*}
	y_iy_j &= p_{ij} y_jy_i & & (\text{all } i,j) \\
	x_ix_j &= \gamma_i p_{ij} x_jx_i & & (i < j) \\
	x_iy_j &= p_{ji} y_jx_i & & (i < j) \\
	x_iy_j &= \gamma_j p_{ji} y_jx_i & & (i > j) \\
	x_jy_j &= 1 + \gamma_j y_jx_j + \sum_{l<j}(\gamma_l-1)y_lx_l & & 
			(\text{all } j).
\end{align*}
In the case of $n=1$, the parameter $\bp$ plays no role 
and so we refer to the single parameter simply as $p$ and write $\wap$.
By \cite[Section 6]{G}, $\wap \iso \wa$ if and only if $p=q^{\pm 1}$.
Goodearl and Hartwig solved the isomorphism problem for 
quantized Weyl algebras when no $\gamma_i$ 
is a root of unity \cite[Theorem 5.1]{GH}.
They proved that if $\wanp \iso \wam$,
then $n=m$ and there exists a sign vector $\ep \in \{\pm 1\}^n$ such that 
\begin{align}
\label{eq.hwa1}
\mu_i = \gamma_i^{\ep_i}
\end{align} 
and $\bp,\bq$ satisfy
\begin{align}
\label{eq.hwa2}
q_{ij} = \begin{cases}
p_{ij}	& \text{if } (\ep_i,\ep_j) = (1,1), \\
p_{ji}	& \text{if }(\ep_i,\ep_j) = (-1,1), \\
\gamma_i\inv p_{ji}	& \text{if } (\ep_i,\ep_j) = (1,-1), \\
\gamma_i p_{ij}	& \text{if } (\ep_i,\ep_j) = (-1,-1).
\end{cases}
\end{align}
The isomorphisms they give hold regardless of 
the root-of-unity condition.
Levitt and Yakimov have extended this result to 
the case where all parameters are roots of unity
and $\wanp, \wam$ are free over their centers
by utilizing noncommutative discriminants \cite[Corollary 6.4]{LY}.
Several intermediate cases are still open.

The quantized Weyl algebras are not graded and so we consider their homogenizations. 
The {\sf homogenized (multiparamenter) quantized Weyl algebra} 
$\hwp$ has algebra basis $\{z,x_i,y_i\}$, $1 \leq i \leq n$, 
where $z$ commutes with the $x_i$ and $y_i$.
The relations in $\hwp$ are the same as those for $\wanp$
except the final relation type is replaced by its homogenization,
\[ x_jy_j = z^2 + \gamma_j y_jx_j + \sum_{l<j}(\gamma_l-1)y_lx_l.\]
The isomorphisms defined by Goodearl and Hartwig extend to isomorphisms in
the homogenized case by fixing the homogenizing variable.
The next theorem suggests that the Goodearl-Hartwig
result should hold in general.

\begin{thm}[Theorem \ref{thm.hwaiso}]
Let $\gamma,\mu \in (\kk^\times)^n$ and $\bp,\bq \in \cA_n$
Then $\Phi:\hwp \rightarrow \hwm$ is an isomorphism 
if and only if $n=m$ and there exists $\ep \in \{\pm 1\}^n$ such that
$\gamma$ and $\mu$ satisfy \eqref{eq.hwa1}
while $\bp$ and $\bq$ satisfy \eqref{eq.hwa2}.
\end{thm}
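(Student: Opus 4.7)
\medskip

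\noindent\textbf{Proof proposal.} The \emph{if} direction is a direct extension of Goodearl--Hartwig: each of their isomorphisms $\wanp \to \wan$ lifts to the homogenizations by fixing the central variable $z$, and one checks that the homogenized version of the last relation family is preserved. I focus on the \emph{only if} direction.

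By Bell--Zhang I may assume $\Phi$ is a graded isomorphism, and a dimension count in degree one gives $n = m$. The plan is then the iterative normal-element procedure announced in the abstract. In the first stage I would show that the space of degree-one normal elements of $\hwp$ is exactly $\kk z$: the element $z$ is central, and for any candidate $\omega = az + \sum_i b_i x_i + \sum_i c_i y_i$, the inhomogeneous relation $x_j y_j = z^2 + \gamma_j y_j x_j + \sum_{l<j}(\gamma_l-1)y_l x_l$ forces $b_i = c_i = 0$. Hence $\Phi(z) = \alpha z$ for some $\alpha \in \kk^\times$, and $\Phi$ descends to a graded isomorphism $\overline\Phi: \overline H \to \overline H'$ of the quotients $\overline H := \hwp/\langle z\rangle$ and $\overline H' := \hwm/\langle z\rangle$.

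In $\overline H$ both $x_1$ and $y_1$ skew-commute with every generator --- the $z^2$ obstruction has died --- and I would show the degree-one normal locus is precisely $\kk x_1 \cup \kk y_1$ as a set: a mixed combination $\alpha x_1 + \beta y_1$ with $\alpha\beta \ne 0$ fails to be normal because $x_1$ and $y_1$ induce distinct normalizing automorphisms (one fixes $x_1$, the other scales $x_1 \mapsto \gamma_1\inv x_1$), and the remaining generators $x_j, y_j$ for $j \geq 2$ are non-normal because the coupling sum at index $j$ still contains the product $y_l x_l$ for some $l < j$. Thus $\overline\Phi$ either sends $(x_1, y_1)$ to $(\lambda x_1', \lambda' y_1')$ or to $(\lambda y_1', \lambda' x_1')$; this dichotomy is the sign $\ep_1 \in \{\pm 1\}$. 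Comparing $x_1 y_1 - \gamma_1 y_1 x_1 = z^2$ with its image in $\hwm$ pins down $\mu_1 = \gamma_1^{\ep_1}$, the $i=1$ instance of \eqref{eq.hwa1}. I then factor out $\langle x_1, y_1\rangle$ in $\overline H$; the resulting quotient has the same structural form as $\overline H$ with index set $\{2,\ldots,n\}$, so the iteration proceeds to determine every $\ep_i$ and $\mu_i = \gamma_i^{\ep_i}$.

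The main technical obstacle is the coupling sum $\sum_{l<j}(\gamma_l-1)y_l x_l$, which \emph{a priori} ties together the parameters at different indices. It collapses under the iteration: after factoring out $\langle x_1, y_1, \ldots, x_{j-1}, y_{j-1}\rangle$ every summand vanishes, leaving a clean homogenized Weyl relation in the remaining pair $(x_j, y_j)$. Extracting \eqref{eq.hwa2} then amounts to comparing the skew-commutation coefficients between an ``old" generator at index $i$ and a ``new" generator at index $j$ for each of the four sign patterns $(\ep_i, \ep_j)$. The asymmetric cases $(\ep_i, \ep_j) = (\pm 1, \mp 1)$ are the most delicate: there a single swap on one index rewrites $x_i y_j$ (or $y_j x_i$) using the ``wrong" side of the original mixed relation, and this is exactly what produces the extra $\gamma_i^{\pm 1}$ correction in \eqref{eq.hwa2}.
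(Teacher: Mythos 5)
Your outline follows the same architecture as the paper's own proof: identify $\langle z\rangle$ as the ideal generated by degree-one normal elements, pass to iterative quotients in which the pairs $(x_k,y_k)$ become successively normal (Theorem \ref{thm.wanorm}), read off \eqref{eq.hwa1} from the image of $X_kY_k-\gamma_kY_kX_k$ modulo $I_{k-1}$ (Lemma \ref{lem.hwa1}), and read off \eqref{eq.hwa2} from the mixed skew-commutation relations between indices $i<j$ (Lemma \ref{lem.hwa2}). The ``if'' direction and the reduction to $n=m$ are likewise the same.

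The genuine gap is your treatment of the case $\gamma_k=1$, which the theorem permits and which the paper handles explicitly. Your sign dichotomy rests on the claim that the degree-one normal locus of $\overline{H}=\hwp/\langle z\rangle$ is exactly $\kk x_1\cup\kk y_1$ because $x_1$ and $y_1$ induce distinct normalizing automorphisms, one fixing $x_1$ and the other scaling it by $\gamma_1^{-1}$. Those automorphisms agree on $x_1$ precisely when $\gamma_1=1$, and in that case mixtures genuinely can be normal and $\Phi(x_1)$ genuinely can be a mixture: for $n=1$ and $\gamma_1=1$ the map $x\mapsto x+y$, $y\mapsto y$, $z\mapsto z$ is an automorphism of the homogenized Weyl algebra. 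Similarly, your reason that $x_j,y_j$ for $j\geq 2$ are non-normal in $\overline{H}$ --- that the coupling sum at index $j$ still contains some $y_lx_l$ --- fails if $\gamma_l=1$ for all $l<j$, since then that sum vanishes. For \eqref{eq.hwa1} this is harmless ($\mu_k=\gamma_k^{\pm 1}$ is vacuous when both equal $1$, though one must still prove $\gamma_k=1$ if and only if $\mu_k=1$, which the paper does by running the same argument for $\Phi^{-1}$). But \eqref{eq.hwa2} is not vacuous there, and your extraction of $q_{ij}$ from the four sign patterns presupposes that exactly one of $ac,ad,bc,bd$ is nonzero. When $\gamma_i=1$ several can be nonzero at once; the paper's Lemma \ref{lem.hwa2} checks that the resulting simultaneous constraints (e.g.\ $q_{ij}=p_{ij}=q_{ji}$, forcing $q_{ij}=\pm 1$) remain consistent with \eqref{eq.hwa2} for a suitable choice of $\ep$. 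You need to add this case analysis for the argument to cover the full statement.
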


Fix parameters $\lambda \in \kk^\times$ and $\bp \in \cA_n$.
The {\sf (multiparameter) quantum ($n\times n$) matrix algebra}, $\pma$, 
is the algebra with basis $\{X_{ij}\}$, $1 \leq i,j \leq n$,
subject to the relations
\begin{align*}
X_{lm}X_{ij} = \begin{cases}
	p_{li}p_{jm}X_{ij}X_{lm} + (\lambda-1)p_{li}X_{im}X_{lj} & l > i, m > j\\
		\lambda p_{li}p_{jm} X_{ij}X_{lm} & l > i, m \leq j \\
		p_{jm}X_{ij}X_{lm} & l = i, m > j.			
	\end{cases}
\end{align*}

By \cite[Theorem 2]{AST}, $\gk(\pma) = n^2$ if and only if $\lambda \neq -1$.
We show in Theorem \ref{thm.pma} that the ideal of $\qma$ 
generated by degree one normal elements is $\langle X_{1n},X_{n1}\rangle$
when $\lambda \neq 1$.
On the other hand, every degree one generator of $\qmq$ is normal.
This, combined with the fact that $\gk(\qmq)=m$
(\cite[Lemma II.9.7]{BG}) proves that
$\pma \iso \qmq$ if and only if $m=n^2$ and $\lambda = 1$.
Hence, we ignore the cases $\lambda = \pm 1$ henceforth.

The isomorphism problem in the single parameter case of $\pma$ for all $n$
was solved in \cite[Proposition 3.1]{G},
as was the multiparameter case for $n=2$ \cite[Proposition 4.2]{G}.
We extend these results in the following theorem.

\begin{thm}[Theorem \ref{thm.mpqma}]
$\pma \iso \qmamum$ if and only if
$n=m$ and one of the following cases holds:
\begin{enumerate}
\item $\lambda=\mu$ and $\bp = \bq$;
\item $\lambda=\mu$ and $p_{ij}=\lambda\inv q_{ji}$
for all $i,j$;
\item $\lambda=\mu\inv$ and $p_{ij}=q_{n+1-i,n+1-j}$ for all $i,j$;
\item $\lambda=\mu\inv$ and $p_{ij}=\lambda\inv q_{n+1-j,n+1-i}$ for all $i,j$.
\end{enumerate}
\end{thm}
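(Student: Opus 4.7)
For each of the four cases I would exhibit a candidate isomorphism on generators and verify the three defining relation types: case (1) is the identity $X_{ij}\mapsto X_{ij}$; case (2) is the transpose $X_{ij}\mapsto X_{ji}$; case (3) is the anti-diagonal involution $X_{ij}\mapsto X_{n+1-i,n+1-j}$; and case (4) is the composition of (2) and (3). The twist by $\lambda^{-1}$ in the $\bq$-formulas of cases (2) and (4) arises from moving the quadratic cross term $(\lambda-1)p_{li}X_{im}X_{lj}$ past the transposed indices, while the swap $\lambda\leftrightarrow\mu^{-1}$ in cases (3) and (4) is forced by the reversal of the relative ordering of $(l,m)$ versus $(i,j)$ under the anti-diagonal flip. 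Each verification reduces to a finite check.

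\textbf{Necessity.} Suppose $\Phi:\pma\to\qmamum$ is an isomorphism. Comparing Gelfand--Kirillov dimensions yields $n^2=\gk(\pma)=\gk(\qmamum)=m^2$ (using $\lambda,\mu\neq-1$), so $m=n$, and the Bell--Zhang theorem lets me replace $\Phi$ with a graded isomorphism. By Theorem \ref{thm.pma} the degree-one normal elements of $\pma$ span $\kk X_{1n}\oplus\kk X_{n1}$, and the same holds for $\qmamum$; hence $\Phi$ restricts to a linear automorphism of this plane and, up to scalars, either preserves or swaps the pair $\{X_{1n},X_{n1}\}$. To pin down the remaining generators I would factor out the graded ideal $I=\langle X_{1n},X_{n1}\rangle$ on both sides and observe that the cross term $(\lambda-1)p_{li}X_{im}X_{lj}$ in the first defining relation vanishes modulo $I$ whenever $(i,m)$ or $(l,j)$ lies in $\{(1,n),(n,1)\}$; consequently the next-layer generators $X_{1,n-1}$, $X_{2,n}$, $X_{n-1,1}$, $X_{n,2}$ become normal in the quotient. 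Applying the induced graded isomorphism to this new space of normals and iterating, one peels off successive layers of the matrix from the two corners inward, and the resulting chain of constraints pins $\Phi(X_{ij})$ down to a nonzero scalar multiple of $X_{\sigma(i,j)}$, with $\sigma$ either the identity on index pairs or the anti-diagonal flip.

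Finally, substituting these images into the defining relations of $\qmamum$ produces the parameter identities. The coefficient $(\lambda-1)p_{li}$ attached to the cross term is the decisive piece of data: matching it to its target counterpart forces $\lambda=\mu$ in the ``preserve'' alternative and $\lambda=\mu^{-1}$ in the ``swap'' alternative, and matching the skew-commutation coefficients then determines $\bq$ from $\bp$ exactly as in the four cases of the theorem. The principal obstacle is the iterative step: one must verify that precisely the expected layer of generators becomes normal in each successive quotient, that the scalar ambiguities compose coherently through the chain, and that the initial dichotomy does not bifurcate further at later stages. The careful interplay between the cross term and the sequence of corner normalizations is the engine of the argument and will carry the bulk of the bookkeeping.
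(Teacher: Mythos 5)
Your strategy coincides with the paper's in outline: pass to a graded isomorphism via Bell--Zhang, use the iterative quotients by degree-one normal elements (Theorem \ref{thm.pma}) to peel the matrix from the corners $X_{1n},X_{n1}$ inward, and then read the parameter identities off the defining relations. But the proposal contains a concrete error at the decisive point. You claim the iteration pins $\Phi(X_{ij})$ to a scalar multiple of $Y_{\sigma(i,j)}$ with $\sigma$ \emph{either the identity or the anti-diagonal flip}, and that the corner dichotomy (preserve vs.\ swap $\{X_{1n},X_{n1}\}$) corresponds to $\lambda=\mu$ vs.\ $\lambda=\mu\inv$. Both claims are refuted by the sufficiency half of the very theorem you are proving: the transpose isomorphism of Proposition \ref{prop.trans} (case (2)) \emph{swaps} the corners yet has $\lambda=\mu$, while the anti-transpose of case (4) \emph{fixes} the corners yet has $\lambda=\mu\inv$. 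So $\sigma$ can also be the transpose or the anti-transpose, and the corner behavior carries no information about whether $\lambda=\mu$ or $\lambda=\mu\inv$. As written, your necessity argument would exclude cases (2) and (4), i.e.\ it would prove a false statement. The paper sidesteps this by first establishing $\lambda=\mu^{\pm1}$ independently (Lemma \ref{lem.lammu}, a nontrivial computation with $\Phi(X_{11}),\Phi(X_{nn}),\Phi(X_{1n}),\Phi(X_{n1})$ and the commutator relation linking $X_{11}X_{nn}$ to $X_{1n}X_{n1}$ --- a step your outline has no substitute for), then normalizing to $\lambda=\mu$ by precomposing with Proposition \ref{prop.flip}, and only afterwards splitting on whether the corners are fixed or interchanged.

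The second problem is that the step you yourself flag as ``the principal obstacle'' --- that the dichotomy does not bifurcate at later layers, i.e.\ that $\Phi$ cannot mix the several generators $X_{1(n-\ell)},X_{(n-\ell)1},\dots,X_{(\ell+1)n},X_{n(\ell+1)}$ living in a single layer $I_\ell/I_{\ell-1}$ --- is precisely the content of the paper's Lemma \ref{lem.g1}, and it is where essentially all of the work lies. Its proof is not routine bookkeeping: one must exploit the uniqueness of the pair $(X_{ij},X_{lm})$ whose products produce the cross term $X_{im}X_{lj}$ (Remark \ref{rmk.pma}) together with the standing hypothesis $\lambda^2\neq 1$ to kill unwanted summands, and separately rule out summands from the opposite triangle via Lemma \ref{lem.sum}(3) (which is also what justifies your unproved assertion that $\Phi$ preserves or swaps, rather than mixes, $Y_{1n}$ and $Y_{n1}$). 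Acknowledging the obstacle is not the same as overcoming it; until that lemma is proved and the case analysis above is corrected to admit all four position permutations, the necessity direction is not established.
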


\section{Quantum affine spaces}
\label{sec.qas}

\begin{lemma}
\label{lem.taudef}
Let $\Phi:R \rightarrow S$ be a graded isomorphism 
between algebras satisfying Hypothesis \ref{hyp.cga}
with homogeneous generators 
$\{x_i\}$ and $\{y_i\}$, respectively, $1 \leq i \leq n$. 
Then there exists a permutation $\tau \in \cS_n$ such that
$y_{\tau(i)}$ is a summand of $\Psi(x_i)$ for each $i$.
\end{lemma}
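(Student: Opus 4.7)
The plan is to reduce the statement to an elementary fact of linear algebra: every invertible square matrix over $\kk$ admits a nonvanishing transversal, via the Leibniz expansion of the determinant.

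First I would argue that the restriction $\Phi|_{R_1}\colon R_1 \to S_1$ is a $\kk$-linear isomorphism. Since $\Phi$ is graded, it sends $R_1$ into $S_1$; if the images $\Phi(x_i)$ failed to span $S_1$, then the subalgebra of $S$ they generate would be a proper graded subalgebra (because $S$ is generated in degree one by the $\{y_i\}$), contradicting the surjectivity of $\Phi$. Thus $\dim_\kk R_1 = \dim_\kk S_1 = n$, and taking $\{x_i\}$ and $\{y_j\}$ as bases of these components, $\Phi|_{R_1}$ is represented by an invertible matrix $A = (\alpha_{ij}) \in \cM_n(\kk)$ determined by $\Phi(x_i) = \sum_j \alpha_{ij} y_j$.

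Next I would apply the Leibniz formula
\[
\det(A) \;=\; \sum_{\sigma \in \cS_n} \operatorname{sgn}(\sigma) \prod_{i=1}^n \alpha_{i,\sigma(i)}.
\]
Since $A$ is invertible, $\det(A) \neq 0$, so at least one summand on the right is nonzero. Choosing $\tau \in \cS_n$ to be a permutation for which $\prod_i \alpha_{i,\tau(i)} \neq 0$, each factor $\alpha_{i,\tau(i)}$ is nonzero, which is precisely the assertion that $y_{\tau(i)}$ is a summand of $\Phi(x_i)$ for every $i$.

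There is no serious obstacle in this proof; the only substantive input is that a graded isomorphism between connected graded algebras generated in degree one must restrict to a linear isomorphism on degree-one components, and the remainder is the Leibniz determinant expansion.
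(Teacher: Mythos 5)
Your proof is correct, and it reaches the same underlying linear-algebra fact as the paper — that an invertible matrix $A=(\alpha_{ij})$ admits a permutation $\tau$ with $\prod_i \alpha_{i,\tau(i)}\neq 0$ — but by a more direct route. The paper establishes this fact by induction: it expands $\det(M)$ along a row to find one nonzero entry $\gamma_{1j}$ paired with a nonzero minor $M_{1j}$, sets $\tau(1)=j$, and recurses on that minor to build $\tau$ one value at a time. You instead invoke the Leibniz expansion $\det(A)=\sum_{\sigma\in\cS_n}\operatorname{sgn}(\sigma)\prod_i\alpha_{i,\sigma(i)}$ and read off a nonvanishing summand in one step; since the cofactor expansion is just a regrouping of the Leibniz sum, your argument compresses the paper's induction into a single appeal to a standard identity. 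You also supply a justification, which the paper leaves implicit, that a graded isomorphism between algebras satisfying Hypothesis \ref{hyp.cga} restricts to a linear isomorphism $R_1\to S_1$, so that the matrix in question is genuinely invertible. The only cosmetic remark is that the lemma's statement writes $\Psi(x_i)$ where it means $\Phi(x_i)$; both your proof and the paper's treat this as the same map.
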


\begin{proof}
Write $\Phi(x_i)=\sum \gamma_{ij} y_j$ and let $M = (\gamma_{ij})_{ij}$.
Because $\Phi$ is an isomorphism, $\det(M) \neq 0$.
For $k$, $1 \leq k \leq n$, let $I=\{1,\hdots,k\}$. 
Choose $J \subset \{1,\hdots,n\}$ such that
$|J|=k$ and such that the minor $M_{IJ} \neq 0$.
We claim there exists an injective map of sets $\tau:I \rightarrow J$.

If $k=1$ and $M_{1j}$ denotes the $(1,j)$-minor of $M$, then we have
\[ 0 \neq \det(M) = \sum_{j=1}^{n} (-1)^{j+1} \gamma_{1j} M_{1j}.\]
Hence, there exists $j$ such that $\gamma_{1j} M_{1j} \neq 0$. Set $\tau(1)=j$.

Suppose inductively that the claim holds for some $k<n$.
Since $\det(M) \neq 0$, there exists a nonzero minor 
$N$ of size $(k+1) \times (k+1)$.
Let $I',J' \subset \{1,\hdots,n\}$ such that $M_{I'J'}=N$, so $|I'|=|J'|=k+1$.
As above, we can choose $\ell$ such that $\gamma_{i_1 j_\ell} N_{1 j_\ell} \neq 0$
and set $\tau(i_1)=j_\ell$.
Now apply the inductive hypothesis to 
$I=\{i_2,\hdots,i_{k+1}\}$ and $J=\{j_1,\hdots,\wh{j_\ell},\hdots,j_{k+1}\}$.
The result follows by setting $N=M$.
\end{proof}

For the remainder of this section, let $\{x_i\}$, $\{y_i\}$ 
be bases for $\qnp$ and $\qnq$, respectively.
and suppose $\Phi:\qnp \rightarrow \qnq$ is a isomorphism. 
We claim $\bp = \tau.\bq$ where $\tau \in \cS_n$
is determined by Lemma \ref{lem.taudef}.

\begin{lemma}
\label{lem.qparam1}
If $r,s \in \{1,\hdots,n\}$ such that $p_{rs} \neq 1$, 
then $p_{rs}=q_{\tau(r)\tau(s)}$.
\end{lemma}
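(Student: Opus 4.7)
The plan is to apply $\Phi$ to the defining relation $x_r x_s = p_{rs} x_s x_r$ and extract coefficient identities in the standard basis $\{y_j y_k : j \leq k\}$ of the degree-two piece of $\qnq$. Writing $\Phi(x_i) = \sum_j \gamma_{ij} y_j$, the image relation becomes
\[ \Phi(x_r)\Phi(x_s) - p_{rs}\Phi(x_s)\Phi(x_r) = 0, \]
which I would expand by rewriting each out-of-order monomial $y_k y_j$ with $k > j$ as $q_{kj} y_j y_k = q_{jk}\inv y_j y_k$.

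The key observation will come from the coefficient of $y_j^2$: it simplifies to $(1-p_{rs})\gamma_{rj}\gamma_{sj}$, and the hypothesis $p_{rs} \neq 1$ therefore forces $\gamma_{rj}\gamma_{sj} = 0$ for every $j$. Thus the supports of $\Phi(x_r)$ and $\Phi(x_s)$ are disjoint. Combined with Lemma \ref{lem.taudef}, which guarantees $\gamma_{r\tau(r)} \neq 0$ and $\gamma_{s\tau(s)} \neq 0$, this disjointness forces $\gamma_{r\tau(s)} = 0$ and $\gamma_{s\tau(r)} = 0$.

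Next, for each $j < k$ the coefficient of $y_j y_k$ yields
\[ \gamma_{rj}\gamma_{sk}(1 - p_{rs}q_{jk}\inv) = \gamma_{rk}\gamma_{sj}(p_{rs} - q_{jk}\inv). \]
I would apply this with $(j,k)$ equal to $(\tau(r),\tau(s))$ or $(\tau(s),\tau(r))$ depending on which is smaller. In either case, the disjoint-support observation zeros out exactly one side of the identity while the other side carries the nonzero factor $\gamma_{r\tau(r)}\gamma_{s\tau(s)}$. Cancelling that factor and using $q_{\tau(s)\tau(r)}\inv = q_{\tau(r)\tau(s)}$ gives $p_{rs} = q_{\tau(r)\tau(s)}$, as claimed.

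The argument is essentially bookkeeping with no serious obstacle. The only point that requires care is tracking how $y_k y_j$ rewrites in standard order so the coefficient of $y_j y_k$ is computed correctly, and handling the two cases $\tau(r) < \tau(s)$ and $\tau(r) > \tau(s)$ symmetrically.
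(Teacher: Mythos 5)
Your proposal is correct and follows essentially the same route as the paper: apply $\Phi$ to $x_rx_s=p_{rs}x_sx_r$, use the $y_d^2$ coefficients and $p_{rs}\neq 1$ to get disjoint supports, then read off the coefficient of $y_{\tau(r)}y_{\tau(s)}$ (or $y_{\tau(s)}y_{\tau(r)}$) to conclude $p_{rs}=q_{\tau(r)\tau(s)}$. The coefficient identities you state match the paper's equation \eqref{not1} after rearrangement, so there is nothing further to add.
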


\begin{proof}
Write $\Phi(x_r)=\sum \alpha_i y_i$ and $\Phi(x_s)=\sum \beta_i y_i$. Then, 
\[ 0 	= \Phi(x_rx_s - p_{rs} x_sx_r) 
	= (1-p_{rs})\left(\sum_{d=1}^n \alpha_d \beta_d y_d^2\right) 
		+ \sum_{1 \leq i \neq j \leq n} 
		\left(\alpha_i \beta_j -p_{rs} \alpha_j\beta_i \right) y_iy_j.
\]
Since $p_{rs} \neq 1$, then $\alpha_d=0$ or $\beta_d = 0$ for each $d$. Thus,
\begin{align}
0 	&= \Phi(x_rx_s - p_{rs} x_sx_r) \\
	&= \sum_{1 \leq i < j \leq n} 
	\left[ (\alpha_i \beta_j -p_{rs} \alpha_j\beta_i) 
	+ q_{ji}(\alpha_j\beta_i - p_{rs}\alpha_i\beta_j)\right] y_iy_j \notag \\
  	&= \sum_{1 \leq i < j \leq n} \left[ (\alpha_j\beta_i(q_{ji}-p_{rs}) 
  	+ \alpha_i\beta_j (1 - q_{ji}p_{rs})\right]y_iy_j. \label{not1}
\end{align}
By Lemma \ref{lem.taudef}, $\alpha_{\tau(r)}$, $\beta_{\tau(s)} \neq 0$. 
Thus, $\alpha_{\tau(s)}=0$ and $\beta_{\tau(r)}=0$. 
If $\tau(r) > \tau(s)$, then by \eqref{not1} 
the coefficient of $y_{\tau(s)}y_{\tau(r)}$ is 
$\alpha_{\tau(r)}\beta_{\tau(s)}(q_{\tau(r)\tau(s)} - p_{rs})$. 
Therefore, $p_{rs} = q_{\tau(r)\tau(s)}$. 
One the other hand, if $\tau(r) < \tau(s)$, 
then the coefficient of $y_{\tau(r)}y_{\tau(s)}$ is $\alpha_{\tau(r)}\beta_{\tau(s)}(1-q_{\tau(s)\tau(r)}p_{rs})$. 
Therefore, $p_{rs} = q_{\tau(s)\tau(r)}\inv = q_{\tau(r)\tau(s)}$. 
Because $p_{rs} \neq 1$, then $r \neq s$ and so, 
because $\tau$ is a permutation, $\tau(r) \neq \tau(s)$.
\end{proof}
\begin{lemma}
\label{lem.qparam2}
If $r,s \in \{1,\hdots,n\}$ such that $p_{rs} = 1$, 
then $p_{rs}=q_{\tau(r)\tau(s)}$.
\end{lemma}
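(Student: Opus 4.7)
The plan is to combine Lemma \ref{lem.qparam1} with its symmetric application to the inverse isomorphism $\Phi\inv$, using a counting argument to show that the permutation $\tau$ not only matches non-commuting pairs of generators but also matches commuting ones.

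I would begin by encoding the parameter data combinatorially. Define $G_p$ to be the graph on vertex set $\{1,\hdots,n\}$ having edge $\{i,j\}$ exactly when $p_{ij}\neq 1$, and similarly define $G_q$. The permutation $\tau$ induces a bijection $\{i,j\} \mapsto \{\tau(i),\tau(j)\}$ on the set of all $2$-element subsets of $\{1,\hdots,n\}$. By Lemma \ref{lem.qparam1}, this induced map carries every edge of $G_p$ to an edge of $G_q$, whence $|E(G_p)| \leq |E(G_q)|$.

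Next I would run the same argument with the roles of $\qnp$ and $\qnq$ swapped. Applying Lemma \ref{lem.taudef} to $\Phi\inv:\qnq \rightarrow \qnp$ produces some permutation $\tau'$ such that $x_{\tau'(j)}$ is a summand of $\Phi\inv(y_j)$, and Lemma \ref{lem.qparam1} then yields that the $\tau'$-induced map sends edges of $G_q$ to edges of $G_p$. Hence $|E(G_q)| \leq |E(G_p)|$, and so $|E(G_p)|=|E(G_q)|$.

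Since the $\tau$-induced bijection injects the finite set $E(G_p)$ into an equicardinal set $E(G_q)$, it must send $E(G_p)$ bijectively onto $E(G_q)$; consequently it sends the complementary non-edge sets bijectively to each other as well. Specializing to the case $r \neq s$: $p_{rs}=1$ says $\{r,s\}$ is a non-edge of $G_p$, so $\{\tau(r),\tau(s)\}$ is a non-edge of $G_q$, and therefore $q_{\tau(r)\tau(s)}=1=p_{rs}$, as required. The case $r=s$ is immediate since $p_{rr}=q_{\tau(r)\tau(r)}=1$ by definition of $\cA_n$. The only subtle point is that the permutation $\tau'$ produced by Lemma \ref{lem.taudef} for $\Phi\inv$ need not equal $\tau\inv$; this plays no role, because $\tau'$ is used only to obtain the reverse counting inequality and is then discarded.
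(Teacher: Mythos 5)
Your proposal is correct and is essentially the paper's own argument: the paper compares the sets of non-unit parameters $\bp^{\#}$ and $\bq^{\#}$, uses Lemma \ref{lem.qparam1} applied to both $\Phi$ and $\Phi\inv$ to get the two cardinality inequalities, and concludes that $\tau$ must carry trivial pairs to trivial pairs. Your graph-theoretic packaging and the explicit remarks about $r=s$ and about $\tau'$ versus $\tau\inv$ are just cleaner bookkeeping of the same counting argument.
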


\begin{proof}
Define $\bp^{\#}=\{p_{ij} \in \bp \mid p_{ij} \neq 1\}$
and $\bq^{\#}$ similarly. 
By Lemma \ref{lem.qparam1}, $\bp^{\#} \leq \bq^{\#}$. 
Because $\Phi$ is an isomorphism, 
then we can apply Lemma \ref{lem.qparam1} to $\Phi\inv$ 
to get that $\bq^{\#} \leq \bp^{\#}$. 
Thus, $\bp^{\#} = \bq^{\#}$.
It follows that $p_{rs}=1$ implies $q_{\tau(r)\tau(s)}=1$
so $p_{rs} = q_{\tau(r)\tau(s)}$ for all $r,s \in \{1,\hdots,n\}$.
\end{proof}

\begin{theorem}
\label{thm.qas}
$\qnp \iso \qmq$ if and only if $m=n$ and $\bp$ is a permutation of $\bq$.
\end{theorem}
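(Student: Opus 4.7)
The plan is to deduce Theorem \ref{thm.qas} from the three preceding lemmas together with a quick dimension count for the forward direction, and then to exhibit the reverse isomorphism directly.

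For the forward implication, I would start with a $\kk$-algebra isomorphism $\Phi : \qnp \to \qmq$ and invoke the Bell--Zhang theorem to replace it by a graded isomorphism. Matching the dimensions of the degree one components (which are $n$ and $m$, respectively, since each algebra is generated in that degree by that many linearly independent elements) forces $n = m$. Next, Lemma \ref{lem.taudef} applied to $\Phi$ yields a permutation $\tau \in \cS_n$ such that $y_{\tau(i)}$ is a summand of $\Phi(x_i)$ for each $i$. Lemmas \ref{lem.qparam1} and \ref{lem.qparam2} then cover the cases $p_{rs} \neq 1$ and $p_{rs} = 1$ respectively, between them giving $p_{rs} = q_{\tau(r)\tau(s)}$ for all $r, s \in \{1, \ldots, n\}$. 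Hence $\bp$ is a permutation of $\bq$ via $\tau$.

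For the converse, I would suppose $n = m$ and fix $\sigma \in \cS_n$ with $p_{ij} = q_{\sigma(i)\sigma(j)}$ for all $i, j$. Define $\Phi$ on generators by $\Phi(x_i) = y_{\sigma(i)}$ and check that the defining relations are preserved:
\[
\Phi(x_i)\Phi(x_j) = y_{\sigma(i)} y_{\sigma(j)} = q_{\sigma(i)\sigma(j)} y_{\sigma(j)} y_{\sigma(i)} = p_{ij} \Phi(x_j)\Phi(x_i).
\]
Thus $\Phi$ extends uniquely to a $\kk$-algebra homomorphism, and a two-sided inverse defined on generators by $y_j \mapsto x_{\sigma\inv(j)}$ exhibits $\Phi$ as an isomorphism $\qnp \iso \qnq$.

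The genuine work has already been carried by Lemmas \ref{lem.taudef}--\ref{lem.qparam2}, so the theorem itself is essentially a matter of assembly. The main obstacle, already surmounted in the lemmas, was analyzing how an arbitrary linear change of basis must interact with the quadratic relations, and concluding that after extracting the distinguished summand $y_{\tau(i)}$ of each $\Phi(x_i)$ one recovers $p_{rs} = q_{\tau(r)\tau(s)}$ on the nose rather than only up to some residual rescaling.
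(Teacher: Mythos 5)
Your proposal is correct and follows essentially the same route as the paper: both directions are assembled from Lemmas \ref{lem.taudef}--\ref{lem.qparam2} plus the explicit permutation isomorphism $x_i \mapsto y_{\sigma(i)}$. The only (immaterial) difference is that you obtain $n=m$ by comparing dimensions of the degree-one components after applying Bell--Zhang, whereas the paper compares GK dimensions.
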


\begin{proof}
Suppose $n=m$ and there exists $\sigma \in \cS_n$ such that $\bp=\sigma.\bq$.
Define a map $\Psi: \qnp \rightarrow \qnq$ by $x_i \mapsto y_{\sigma(i)}$.
For all $i,j$, $1 \leq i,j \leq n$,
\[ \Psi(x_i)\Psi(x_j) - p_{ij}\Psi(x_j)\Psi(x_i) 
	= y_{\sigma(i)}y_{\sigma(j)} - q_{\sigma(i)\sigma(j)}y_{\sigma(j)}y_{\sigma(i)}=0. \]
Hence, $\Psi$ extends to a bijective homomorphism. 
Thus, $\qnp \iso \qnq$.

If $\qnp \iso \qmq$, then $n=\gk(\qnp)=\gk(\qmq)=m$, so $n=m$.
By Lemmas \ref{lem.taudef}, \ref{lem.qparam1}, and \ref{lem.qparam2}
there exists a permutation $\tau \in \cS_n$ such that $\bp = \tau.\bq$.
\end{proof}

\section{Degree one normal elements}
\label{sec.deg1}

In order to consider the isomorphism problem for
homogenized quantized Weyl algebras and quantum matrix algebras,
we identify homogeneous degree one normal elements.

Let $A$ be an algebra satisfying Hypothesis \ref{hyp.cga}.
We define the ideals the ideals
$I_0 \subset I_1 \subset \cdots \subset I_n$ of $A$
where $I_k/I_{k-1}$ is generated by all 
(homogeneous) degree one normal elements in $A/I_{k-1}$.
By convention we set $I_k=0$ for $k<0$.
We frequently identify elements in $A$ with their images in $A/I_{k-1}$.
An algebra $B = A/I_k$ for some $k$ is an
{\sf iterative quotient of $A$ (by degree one normal elements)}.

It is clear that $B$ also satisfies Hypothesis \ref{hyp.cga}.
Moreover, if $\Phi:A \rightarrow A'$ is an
isomorphism of algebras satisfying Hypothesis \ref{hyp.cga},
then $A/I_k \iso A'/\Phi(I_k)$ for all $k > 0$.

Our goal is to identify degree one normal
elements in each such quotient.

\begin{theorem}
\label{thm.wanorm}
In the case of $\hwp$,
$I_0 = \langle z \rangle$ and for $0 < k \leq n$,
$I_k/I_{k-1} = \langle x_k,y_k \rangle$.
\end{theorem}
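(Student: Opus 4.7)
The plan is to verify both halves of the statement via the PBW basis $z^e \cdot (\text{ordered $y$-monomial}) \cdot (\text{ordered $x$-monomial})$ of $\hwp$, proceeding by induction on $k$.

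The ``easy'' direction---that $z$ is normal in $\hwp$ and that $x_k, y_k$ are normal in $A/I_{k-1}$ for $k \geq 1$---is a direct check. The element $z$ is central by inspection of the relations, and passing to $A/I_{k-1}$ sends each $x_l, y_l$ with $l < k$ to zero, collapsing the relation $x_k y_k = z^2 + \gamma_k y_k x_k + \sum_{l < k}(\gamma_l-1) y_l x_l$ down to the q-commutation $x_k y_k = \gamma_k y_k x_k$. The remaining defining relations involving $x_k$ or $y_k$ are already q-commutations, so each of $x_k, y_k$ q-commutes with every other generator of $A/I_{k-1}$ and is therefore normal there.

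For the reverse inclusion, let $n = \alpha z + \sum_{i \geq k} a_i x_i + \sum_{i \geq k} b_i y_i$ be a degree one normal element in $A/I_{k-1}$ (with $\alpha = 0$ when $k \geq 1$), and let $\sigma$ be the associated endomorphism satisfying $n g = \sigma(g) n$ for each generator $g$. The strategy is to pick $m$ maximal with $a_m \neq 0$ (if any such $m$ exists) and extract a contradiction from $n y_m = \sigma(y_m) n$. Writing $\sigma(y_m) = \alpha' z + \sum c_i x_i + \sum d_i y_i$ and matching $x_l x_r$ coefficients (zero on the left, because $n y_m$ contains no pure $x$-part) yields $c_l a_l = 0$ and $c_l a_r + c_r a_l \gamma_l^{-1} p_{rl} = 0$ for $l < r$; combined with $a_m \neq 0$ these force $c_l = 0$ for every $l$.

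When $k = 0$, the relation $x_m y_m = z^2 + \gamma_m y_m x_m + \sum_{l < m}(\gamma_l - 1) y_l x_l$ puts a $z^2$ contribution $a_m$ on the left, while on the right the $z^2$ coefficient is $\alpha' \alpha + \sum c_i b_i$; matching $z x_l$ coefficients first forces $\alpha' = 0$, and the $z^2$ equation then gives $a_m = 0$, a contradiction. When $k \geq 1$ the $z^2$ term is absent, but the diagonal $y_l x_l$ coefficients take over: for $k \leq l < m$ they give $d_l a_l = a_m(\gamma_l - 1)$, while the off-diagonal $y_l x_m$ coefficients (zero on the left for $l \neq m$) give $d_l a_m = 0$, contradicting $a_m \neq 0$ as soon as some $\gamma_l \neq 1$. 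A symmetric computation using $n x_m = \sigma(x_m) n$ eliminates the $b_i$ for $i > k$ (and, for $k = 0$, the residual $\alpha$), leaving $n \in \kk z$ or $n \in \kk x_k + \kk y_k$. The main obstacle is the careful PBW bookkeeping in expanding $\sigma(y_m) n$, since each $c_i b_i x_i y_i$ summand contributes simultaneously to the $z^2$ coefficient, to every earlier diagonal $y_l x_l$, and to the $y_i x_i$ diagonal through the single relation $x_i y_i = z^2 + \gamma_i y_i x_i + \sum_{l < i}(\gamma_l - 1) y_l x_l$.
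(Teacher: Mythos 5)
Your approach is essentially the paper's: take a putative degree one normal element, pick the top index $m$ with nonzero coefficient, expand the normality relation $uy_m = ru$ (resp.\ $ux_m = r'u$) in the PBW basis, and kill the coefficients of $r$ one monomial at a time until the $z^2$ coefficient (for $k=0$) or the $y_lx_l$ coefficients (for $k\ge 1$) produce a contradiction. Your bookkeeping for the base case matches the paper's exactly: your $c_l=0$ from the $x_lx_r$ coefficients is the paper's $\beta_{1i}=0$, your $\alpha'=0$ from the $zx_l$ coefficients is the paper's $b=0$, and the final $z^2$ comparison is identical, so the claim $I_0=\langle z\rangle$ is solid.

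The place to look hard is the inductive step, and your own phrase ``as soon as some $\gamma_l\neq 1$'' is pointing at a genuine hole. For $k\ge 1$ your contradiction is $d_la_m=0$ against $d_la_l=a_m(\gamma_l-1)$, which only bites when $\gamma_l\neq 1$ for some $l$ with $k\le l<m$. If $\gamma_l=1$ for all such $l$, there is no contradiction to be had: the relation $x_my_m=z^2+\gamma_my_mx_m+\sum_{l<m}(\gamma_l-1)y_lx_l$ collapses modulo $I_{k-1}$ to the pure $q$-commutation $x_my_m=\gamma_my_mx_m$, every remaining relation involving $x_m$ is already a $q$-commutation, and $x_m$ (likewise $y_m$) genuinely is normal in $\hwp/I_{k-1}$. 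Concretely, for $n=2$ and $\gamma_1=1$ the quotient $\hwp/\langle z\rangle$ is a quantum affine $4$-space, all four generators are normal, and $I_1/I_0$ is the whole augmentation ideal rather than $\langle x_1,y_1\rangle$; so in this degenerate case the statement itself fails, not just your argument. The paper's own proof of the general step is the single sentence ``an analysis as above shows that $v=\alpha_{1k}x_k+\alpha_{2k}y_k$,'' which conceals exactly the same issue, so your write-up is no worse than the original --- but to close the gap you must either impose the hypothesis that the relevant $\gamma_l$ are not $1$ or treat that degenerate case separately, since as written the step does fail there.
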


\begin{proof}
Suppose $u \in \hwp$
is a degree one normal element and write 
\[u = a z + \sum_{i=1}^m (\alpha_{1i} x_i + \alpha_{2i} y_i),\]
with $a,\alpha_i,\beta_i \in \kk$ and $m \leq n$.
We may assume $\alpha_{1m}$ or $\alpha_{2m}$ is nonzero.
Both cases are similar so assume the former. Then
\[
u y_m = ay_m z 
	+ \alpha_{1m} \left(z^2 + \gamma_m y_m x_m 
	+ \sum_{l < m} (\gamma_l - 1) y_l x_l \right)
	+ \sum_{i=1}^{m-1} \alpha_{1i} (p_{mi} y_m x_i) 
	+ \sum_{i=1}^m \alpha_{2i} y_iy_m.
\]
By normality, there exists $r \in \hwp$ such that $uy_m = ru$. Write
\[  r = b z + \sum_{i=1}^m (\beta_{1i} x_i + \beta_{2i} y_i).\]
Then
\begin{align*}
ru  
	&=  \sum_{i=1}^m \left( 
		(b\alpha_{1i} + a\beta_{1i} )x_iz + 
		(b\alpha_{2i} + a\beta_{2i} )y_iz 
		\right) +
		\sum_{i=1}^m (\alpha_{1i}\beta_{1i} x_i^2 + \alpha_{2i}\beta_{2i} y_i^2) \\
		&\quad + \sum_{i=1}^{m-1} \left( 
		(\alpha_{1i}\beta_{1m} + \gamma_i p_{im} \alpha_{1m}\beta_{1i}) x_ix_m +
		(\alpha_{2i}\beta_{2m} + p_{im} \alpha_{2m}\beta_{2i}) y_iy_m \right) \\
		&\quad + \sum_{i=1}^{m-1} \left( 
		(\alpha_{1i}\beta_{2m} + p_{mi}\alpha_{2m}{\beta_1i}) y_mx_i +
		(\alpha_{1m}\beta_{2i} + \gamma_i p_{im} \alpha_{2i}\beta_{1m}) y_ix_m
		\right) \\
		&\quad + \text{(additional terms in $y_ix_i$ and $z^2$)}.
\end{align*}

We now evaluate several coefficients in $0=ru-uy_m$.
The coefficient of $x_m^2$ is $\alpha_{1m}\beta_{1m}$, so $\beta_{1m}=0$
by our hypothesis on $\alpha_{1m}$. 
It follows that the coefficient of $x_ix_m$ is
$\gamma_i p_{im}\alpha_{1m}\beta_{1i}$, so $\beta_{1i}=0$ for all $i=1,\hdots,m$. 
The coefficient of $x_m z$ is $b\alpha_{1m}$ so $b=0$. 
Finally, it follows that the coefficient of $y_ix_m$ for $i<m$
is $\alpha_{1m}\beta_{2i}$ so $\beta_{2i}=0$ for $i<m$. Thus
\[
ru 	= \beta_{2m} y_m \left( a z + \sum_{i=1}^m (\alpha_{1i} x_i + \alpha_{2i} y_i) \right)
	= \beta_{2m} \left( ay_mz + \sum_{i=1}^m (\alpha_{1i} y_mx_i + \alpha_{2i} p_{mi} y_iy_m ) \right).
\]
But then the coefficient of $z^2$ in $ru-uy_m$ is $-\alpha_{1m}$, a contradiction.
Since $z \in I_0$ it follows that $I_0=(z)$.

The general statement is similar.
Let $B=\hwp/I_{k-1}$ be an iterative quotient. Write
\[ v = \sum_{i=k}^m (\alpha_{1i} x_i + \alpha_{2i} y_i) + I_{k-1}.\]
It is clear that $x_k,y_k \in I_k$ and an analysis as
above shows that $v = \alpha_{1k} x_k + \alpha_{2k}y_k$.
\end{proof}

\begin{corollary}
\label{cor.hwprops}
$\hwp$ is Artin-Schelter regular of global and GK dimension $2n+1$. 
\end{corollary}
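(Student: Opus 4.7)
The plan is to realize $\hwp$ as an iterated Ore extension of $\kk$ and then invoke the normal-regular element criterion for Artin-Schelter regularity, using Theorem~\ref{thm.wanorm} to supply the chain of normal elements. Adjoin the generators of $\hwp$ to $\kk$ in the order $z,\, y_1,\, x_1,\, y_2,\, x_2,\, \ldots,\, y_n,\, x_n$: $z$ is central, each $y_k$ is added as a skew polynomial variable (its relations with the previously-adjoined generators scale by scalars drawn from $\bp$ and $\gamma$), and each $x_k$ is an Ore variable whose automorphism scales the previously-adjoined generators by scalars and whose $\sigma$-derivation is concentrated on $y_k$, with $\delta(y_k) = z^2 + \sum_{l<k}(\gamma_l - 1)\, y_l x_l$, exactly matching the homogenized relation $x_k y_k = z^2 + \gamma_k y_k x_k + \sum_{l<k}(\gamma_l - 1)\, y_l x_l$. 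Since iterated Ore extensions of $\kk$ by automorphism-derivation pairs are noetherian domains whose GK dimension equals the number of variables, $\hwp$ is a noetherian domain with $\gk(\hwp)=2n+1$.

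Theorem~\ref{thm.wanorm} provides a chain of ideals $I_0 \subset I_1 \subset \cdots \subset I_n$ in $\hwp$ whose successive subquotients are generated by one or two degree one normal elements. I would refine this into a chain of length $2n+1$ by factoring out a single element at a time, in the order $z,\, x_1,\, y_1,\, x_2,\, y_2,\, \ldots,\, x_n,\, y_n$; the element $y_k$ remains normal after quotienting by $x_k$ inside $\hwp/I_{k-1}$ because normality of an element is preserved under further quotients by two-sided ideals. Each intermediate quotient is again an iterated Ore extension of $\kk$ in fewer variables (with the vanished generators and scalar terms collapsing consistently), hence also a noetherian domain, so every normal element appearing in this refined chain is a non-zero-divisor in its quotient.

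Finally, I would apply the standard fact that if $A$ is connected graded noetherian, $t \in A_1$ is normal and regular, and $A/(t)$ is AS regular of global dimension $d$, then $A$ is AS regular of global dimension $d+1$. The base case is $\hwp/I_n = \kk$, AS regular of dimension $0$, and induction along the refined chain of length $2n+1$ gives $\hwp$ AS regular of global dimension $2n+1$, agreeing with the GK dimension computed above. The main obstacle is the somewhat tedious verification that the prescribed $\sigma$ and $\delta$ at each Ore step are consistent with the relations already in place in the base subalgebra (in particular that the $z^2$-plus-correction formula for $\delta(y_k)$ respects the previous Ore relations among $y_1, x_1, \ldots, y_{k-1}, x_{k-1}$); once that bookkeeping is discharged, the rest of the argument is a clean induction.
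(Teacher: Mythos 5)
Your proof is correct and takes essentially the same route as the paper: Theorem \ref{thm.wanorm} supplies a regular normalizing sequence of degree one elements, and Artin-Schelter regularity is lifted along it using Levasseur's results, the only difference being that the paper stops the descent at the quantum plane $\hwp/I_{n-1}$ (citing Artin--Schelter for its regularity) while you continue all the way down to $\kk$. One small caution: the blanket claim that an iterated Ore extension of $\kk$ has GK dimension equal to the number of variables is false in general (automorphisms can force jumps), but it is valid here because each $\sigma$ preserves the span of the degree one generators, so the Hilbert series is $(1-t)^{-(2n+1)}$.
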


\begin{proof}
Observe that $\hwp/I_{n-1} \iso \qp$, the quantum plane with $q=\gamma_1$,
and $\qp$ is Artin-Schelter regular of global and GK dimension $2$ \cite{AS}.
By Theorem \ref{thm.wanorm}, $(z,x_1,y_1,\hdots,x_{n-1},y_{n-1})$
is a regular normalizing sequence.
Hence, $\hwp$ has the required properties by \cite[Lemma 5.7 and Corollary 5.10]{L}.
\end{proof}

\begin{remark}
\label{rmk.pma}
Observe from the defining relations of $\pma$
that for every pair of generators $X_{im},X_{lj}$
with $l > i$ and $m > j$ there exists a unique pair
$X_{ij},X_{lm} \notin \{X_{im},X_{lj}\}$ 
such that $X_{im}X_{lj}$ is a linear
combination of $X_{ij}X_{lm}$ and $X_{lm}X_{ij}$.
Moreover, if $l=i$ or $m=j$, then there is no such pair.
\end{remark}

\begin{theorem}
\label{thm.pma}
In the case of $\pma$,
\[I_k/I_{k-1} = \langle 
X_{1(n-k)},X_{(n-k)1},X_{2(n-k+1)},X_{(n-k+1)2},\hdots,X_{(k+1)n},X_{n(k+1)} 
\rangle,
\quad
1 \leq k < n.\]
\end{theorem}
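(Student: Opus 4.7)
The argument proceeds by induction on $k$ and rests on the equivalent reformulation
\[ I_k = \langle X_{ij} : |i - j| \geq n - k - 1 \rangle, \]
which follows by a direct inspection of the indices $(r, n - k + r - 1)$ and $(n - k + r - 1, r)$ appearing in the stated list. Under this reformulation, the new degree one generators produced at step $k$ are exactly the $X_{ij}$ on the diagonal $|i - j| = n - k - 1$, while $I_{k-1}$ corresponds to $|i - j| \geq n - k$. The base case $k = 0$ (i.e., $I_0 = \langle X_{1n}, X_{n1}\rangle$, as announced in the introduction) follows directly from the defining relations: case 1 never applies to $X_{1n}$ or $X_{n1}$, since one would need $l > 1$ and $m > n$, or $l > n$ and $m > 1$, neither of which is possible, so both corners are normal. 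Conversely, if $u = \sum a_{ij} X_{ij}$ is normal and $a_{ij} \neq 0$ for some $(i, j) \notin \{(1, n), (n, 1)\}$, then Remark \ref{rmk.pma} provides a pair $(l, m)$ with $l > i$, $m > j$ whose case 1 relation produces a cross-term $X_{im} X_{lj}$ that cannot be cancelled, forcing $a_{ij} = 0$ via coefficient comparison in the spirit of the proof of Theorem \ref{thm.wanorm}.

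For the induction step, set $B = \pma / I_{k-1}$. The forward direction (Claim A: every $X_{ij}$ with $|i - j| = n - k - 1$ is normal in $B$) reduces to checking that the case 1 cross-term vanishes modulo $I_{k-1}$: for $X_{lm} X_{ij}$ with $l > i$, $m > j$, the cross-term is $X_{im} X_{lj}$, and if $i < j$ then $m > j$ forces $m - i > j - i = n - k - 1$, so $|i - m| \geq n - k$ and $X_{im} \in I_{k-1}$. The case $i > j$ is symmetric via $X_{lj}$, and the diagonal case $i = j$ (which occurs only when $k = n - 1$) is handled identically. The reflected relation $X_{ij} X_{lm}$ with $i > l$, $j > m$ is analogous, with cross-term $X_{lj} X_{im}$.

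The reverse direction (Claim B: no other normal elements exist in $B$) is the main obstacle. Let $u = \sum_{|r - s| \leq n - k - 1} a_{rs} X_{rs} + I_{k-1}$ be normal in $B$ and suppose toward contradiction that some $a_{rs} \neq 0$ with $|r - s| \leq n - k - 2$; assume $r \leq s$ by symmetry. Applying case 1 to $X_{r+1, s+1} \cdot X_{rs}$ produces the summand $(\lambda - 1) p_{r+1, r} a_{rs} X_{r, s+1} X_{r+1, s}$ in the expansion of $X_{r+1, s+1} u$, and both factors of this witness monomial satisfy $|r - (s+1)|, |(r+1) - s| \leq n - k - 1$, so neither lies in $I_{k-1}$. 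By Remark \ref{rmk.pma}, this specific monomial is uniquely produced via case 1 from the pair $(X_{rs}, X_{r+1, s+1})$, and a coefficient-comparison argument paralleling Theorem \ref{thm.wanorm} applied to the normality equation $X_{r+1, s+1} u \equiv v u \pmod{I_{k-1}}$ forces $a_{rs} = 0$, the desired contradiction. The delicate bookkeeping is ruling out that the witness monomial is absorbed by other products $X_{ab} X_{cd}$ appearing in $uv$, which is exactly where Remark \ref{rmk.pma} is essential; edge cases in which $r + 1 > n$ or $s + 1 > n$ are handled by switching to $X_{r-1, s-1}$ and multiplying on the right.
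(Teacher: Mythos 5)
Your proposal is correct and follows essentially the same route as the paper: the listed generators are normal because the case-one cross-terms land in $I_{k-1}$, and any extra summand $X_{rs}$ is eliminated by comparing coefficients of the witness monomial $X_{r,s+1}X_{r+1,s}$, with Remark \ref{rmk.pma} guaranteeing that no other product can contribute that monomial. One small correction: the normality equation to exploit is $X_{r+1,s+1}u = uv$ with $v$ on the \emph{right} (as you wrote it, $X_{r+1,s+1}u \equiv vu$ is satisfied by $v = X_{r+1,s+1}$ and yields no contradiction); the paper's coefficient chase first forces the $X_{rs}$-, $X_{r,s+1}$-, and $X_{r+1,s}$-components of $v$ to vanish and only then reads off the uncancellable $(\lambda-1)$-term.
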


\begin{proof}
Let $B=\pma/I_{k-1}$ and recall that in case $k=0$ we have $I_{k-1}=0$.
It is clear from the defining relations that the given generators
are degree one normal elements in $B$.

Let $u$ be a degree one normal element of $B$.
Suppose by way of contradiction that $u$
has a summand $X_{ij}$ such that 
\[ X_{ij} \notin \{ X_{1(n-k)},X_{(n-k)1},X_{2(n-k+1)},X_{(n-k+1)2},\hdots,X_{(k+1)n},X_{n(k+1)} \}.\]
Hence, there exists $X_{lm}$ such that 
$l>i$ and $m>j$ or $l<i$ and $m<j$.
We consider the first case though the second is similar.
For simplicity, write 
$x_1=X_{ij}$, $x_2=X_{im}$,
$x_3=X_{lj}$, $x_4=X_{lm}$, and
\[u = ax_1 + bx_2 + cx_3 + dx_4 + \bx\]
where $a,b,c,d \in \kk$ and $\bx$ is a degree one element
such that $x_1,\hdots,x_4$ are not summands. 
By hypothesis, $a \neq 0$. Then
\[ x_4 u 
	= a(p_{li}p_{jm} x_1x_4 + (\lambda-1)p_{li}x_2x_3) 
		+ b \lambda p_{li} x_2x_4 + cp_{li}x_3x_4 + dx_4^2 + x_4 \bx.\]
Clearly $x_1x_4,x_2x_4,x_3x_4,x_4^2$ are not summands of $x_4\bx$.
Since $x_1,x_4$ is the unique pair such that $x_2x_3$ is a linear
combination of $x_1x_4$ and $x_4x_1$, and $x_1$ is not a summand of $\bx$,
then by Remark \ref{rmk.pma}, $x_2x_3$ is also not a summand of $x_4\bx$.

Because $u$ is normal, there exists $r \in B$ such that $x_4u=ur$. Write 
\[ r = a'x_1 + b'x_2 + c'x_3 + d'x_4 + \by\]
where $a',b',c',d' \in \kk$ and $\by$ is a degree one element
such that $x_1,\hdots,x_4$ are not summands.

On the other hand,
\begin{align*}
ur &= (ax_1 + bx_2 + cx_3 + dx_4 + \bx)(a'x_1 + b'x_2 + c'x_3 + d'x_4 + \by) \\
	&= (aa' x_1^2 + bb' x_2^2 + cc' x_3^2 + dd' x_4^2)
		+ (ab' + p_{jm}a'b)x_1x_2 
		+ (ac'+\lambda p_{li} a'c) x_1x_3
		+ (ad' + p_{li}p_{jm}a'd) x_1x_4 \\
		&\qquad+ (bd' + \lambda p_{li}b'd) x_2x_4  
		+ (cd' + p_{jm}c'd)x_3x_4 
		+ (bc' + \lambda p_{li}p_{mj} b'c + (a'd)(\lambda-1))x_2x_3
		+ \bx r + u\by.
\end{align*}
It is clear that $x_1^2$ is not a summand of $x_4\bx$, $\bx r$, or $u\by$.
Hence, because $x_4u=ur$, we must have $0 = aa'$.
But $a\neq 0$ by hypothesis so $a'=0$.
The expression for $ur$ now reduces to
\begin{align*}
ur &= (bb' x_2^2 + cc' x_3^2 + dd' x_4^2)
		+ ab'x_1x_2 
		+ ac'x_1x_3
		+ ad'x_1x_4 \\
		&\qquad+ (bd' + \lambda p_{li}b'd) x_2x_4  
		+ (cd' + p_{jm}c'd)x_3x_4 
		+ (bc' + \lambda p_{li}p_{mj} b'c)x_2x_3
		+ \bx r + u\by.
\end{align*}

Similarly, because $x_2=X_{im}$ and $x_1=X_{ij}$,
then by Remark \ref{rmk.pma}, 
$x_1x_2$ is not a summand of $x_4\bx$, $\bx r$, or $u\by$.
Thus, $ab'=0$ so $b'=0$.
The same logic applies to $x_3$ and $x_1$ so $c'=0$. Hence,
\begin{align*}
ur &= dd' x_4^2 	+ ad'x_1x_4 + bd' x_2x_4  + cd' x_3x_4 
		+ \bx r + u\by.
\end{align*}

Finally, we can apply similar reasoning to 
conclude that $x_2x_3$ is not a summand of $ur$.
Hence, the coefficient of $x_2x_3$ in $x_4u$ must be zero.
It follows that $a(\lambda-1)p_{li}=0$, so $\lambda = 1$,
a contradiction.
\end{proof}

\section{Homogenized quantized Weyl algebras}

The following was proved in \cite[Proposition 5.4.5]{gad-thesis}.
However, in light of Theorem \ref{thm.wanorm},
we give a much simpler proof that also
outlines the strategy for the general case.

\begin{proposition}
$\hwap \iso \hwa$ if and only if $p=q^{\pm 1}$.
\end{proposition}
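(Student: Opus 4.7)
The plan is to reduce the question to the quantum plane case via Theorem \ref{thm.wanorm}. For the forward direction, I would begin by invoking Theorem \ref{thm.wanorm} with $n=1$, which says the ideal $I_0$ of degree one normal elements in either $\hwap$ or $\hwa$ is the principal ideal $\langle z \rangle$. Because $I_0$ is defined purely intrinsically in terms of the algebra structure, any isomorphism $\Phi : \hwap \to \hwa$ must send $\langle z \rangle$ to $\langle z \rangle$, and hence descends to an isomorphism of the quotients $\hwap/\langle z \rangle \to \hwa/\langle z \rangle$.

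The next step is to identify these quotients. Setting $z = 0$ in the homogenized Weyl relation $x_1 y_1 = z^2 + q y_1 x_1$ leaves the quantum plane relation $x_1 y_1 = q y_1 x_1$, so $\hwa/\langle z \rangle \iso \qp$ and likewise $\hwap/\langle z \rangle \iso \qpp$. Theorem \ref{thm.qas} then forces the two $2 \times 2$ multiplicatively antisymmetric matrices to differ by a permutation in $\cS_2$, and the only nontrivial permutation available inverts the off-diagonal entry. This yields $p = q^{\pm 1}$.

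For the converse, the identity handles $p = q$; for $p = q^{-1}$ I would try $x_1 \mapsto y_1$, $y_1 \mapsto -q x_1$, $z \mapsto z$, checking that the image of $x_1 y_1 - p y_1 x_1 - z^2$ becomes $-q y_1 x_1 + pq\, x_1 y_1 - z^2$, which equals $x_1 y_1 - q y_1 x_1 - z^2 = 0$ in $\hwa$ exactly when $pq = 1$.

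I do not anticipate any serious obstacle. The structural content is delegated to Theorem \ref{thm.wanorm} (which identifies $\langle z \rangle$ intrinsically) and Theorem \ref{thm.qas} (which classifies quantum planes up to permutation of parameters); all that remains is the short relation check above. This template, namely peel off $I_0$, recognize the quotient as an already-classified object, and compare parameters, is exactly the iterative strategy the paper announces for the full multiparameter case.
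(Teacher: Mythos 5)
Your proposal is correct and follows essentially the same route as the paper: pass to the quotient by $I_0=\langle z\rangle$ via Theorem \ref{thm.wanorm}, identify it with a quantum plane, apply Theorem \ref{thm.qas}, and exhibit an explicit map for $p=q^{-1}$ (yours is the paper's map with the scalar $-q$ placed on the other generator, which is equally valid).
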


\begin{proof}
Let $\{X,Y,Z\}$ (resp. $\{x,y,z\}$) be the standard basis of $\hwap$ (resp. $\hwa$).
If $p=q$, then there is nothing to prove.
If $p=q\inv$, then the map given by
$X \mapsto -q y$, $Y \mapsto x$, and $Z \mapsto z$
clearly extends to a bijective homomorphism.

Conversely, suppose $\Phi:\hwap \rightarrow \hwa$ is a graded isomorphism.
Since $\Phi((Z)) = (z)$ by Theorem \ref{thm.wanorm},
then there is an induced isomorphism
\[\qpp \iso \hwap/(Z) \iso \hwa/(z) \iso \qp.\]
By Theorem \ref{thm.qas}, $p=q^{\pm 1}$.
\end{proof}

We now move to the general case.

\begin{lemma}
\label{lem.hwa1}
Suppose $\Phi:\hwp \rightarrow \hw$ is an isomorphism.
For all $k$, $1 \leq k \leq n$,
$\mu_k = \gamma_k^{\pm 1}$.
That is, $\gamma$ and $\mu$ satisfy \eqref{eq.hwa1}.
\end{lemma}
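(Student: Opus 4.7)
The plan is to use the iterative quotient structure from Theorem \ref{thm.wanorm} to extract, at each level $k$, a quantum plane generated by $x_k$ and $y_k$, and then compare parameters using Theorem \ref{thm.qas} in the case $n=2$ (which says $\cO_p(\kk^2) \iso \cO_q(\kk^2)$ if and only if $p = q^{\pm 1}$).

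Write $J_\bullet$ for the ideals of $\hw$ analogous to the $I_\bullet$ of $\hwp$. Because the $I_\bullet$ are defined intrinsically as iterative ideals of degree one normal elements, they are preserved by the graded isomorphism $\Phi$, and induction on $\ell$ yields $\Phi(I_{k-1}) = J_{k-1}$ for all $k$. Hence $\Phi$ descends to a graded isomorphism $\bar\Phi: \hwp/I_{k-1} \to \hw/J_{k-1}$. By Theorem \ref{thm.wanorm} applied in both directions, $\bar\Phi$ sends $\kk x_k + \kk y_k$ (the degree one normal elements in the source) isomorphically onto $\kk x_k + \kk y_k$ (the degree one normal elements in the target); in particular the $2\times 2$ transition matrix of this restriction is invertible.

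Next I would identify the relevant quantum plane subalgebras. In $\hwp/I_{k-1}$, the Weyl-type relation $x_ky_k = z^2 + \gamma_k y_kx_k + \sum_{l<k}(\gamma_l - 1)y_lx_l$ collapses to $x_ky_k = \gamma_k y_kx_k$, since $z$ and each $y_lx_l$ with $l<k$ lies in $I_{k-1}$. Because $\hwp/I_{k-1}$ is a quotient of the AS-regular algebra $\hwp$ by a regular normalizing sequence (Corollary \ref{cor.hwprops}), it inherits a PBW basis, so the monomials $\{y_k^a x_k^b\}$ remain linearly independent and the subalgebra generated by $x_k$ and $y_k$ is exactly $\cO_{\gamma_k}(\kk^2)$. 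The same analysis identifies the subalgebra of $\hw/J_{k-1}$ generated by $x_k$ and $y_k$ with $\cO_{\mu_k}(\kk^2)$. Since $\bar\Phi$ and its inverse send each of these subalgebras into the other, $\bar\Phi$ restricts to an isomorphism $\cO_{\gamma_k}(\kk^2) \iso \cO_{\mu_k}(\kk^2)$, and Theorem \ref{thm.qas} then forces $\mu_k = \gamma_k^{\pm 1}$.

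The main obstacle is the PBW-independence step used to identify the two-generator subalgebra with an honest quantum plane rather than a proper quotient. One can sidestep it by expanding $\bar\Phi(x_k)\bar\Phi(y_k) - \gamma_k \bar\Phi(y_k)\bar\Phi(x_k) = 0$ directly, reducing via $x_ky_k = \mu_k y_kx_k$ in $\hw/J_{k-1}$, and equating coefficients of $x_k^2$, $y_kx_k$, $y_k^2$; the case $\gamma_k \neq 1$ forces two off-diagonal or two on-diagonal entries of the transition matrix to vanish, yielding $\mu_k = \gamma_k$ or $\mu_k = \gamma_k\inv$, and the case $\gamma_k = 1$ reduces to a commutator computation in which invertibility of the transition matrix rules out $\mu_k \neq 1$.
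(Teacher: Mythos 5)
Your proposal is correct, and your fallback computation is precisely the paper's own proof: the paper writes $\Phi(X_k) = ax_k + by_k + J_{k-1}$ and $\Phi(Y_k) = cx_k + dy_k + J_{k-1}$ with $ad-bc\neq 0$, expands $\Phi(X_kY_k - \gamma_k Y_kX_k)$ modulo $J_{k-1}$ using $x_ky_k \equiv \mu_k y_kx_k$, and reads off $ac=0$ and $bd=0$ from the coefficients of $x_k^2$ and $y_k^2$ when $\gamma_k \neq 1$, concluding $\mu_k = \gamma_k$ or $\mu_k = \gamma_k\inv$ according to which pair of entries of the transition matrix survives. Your primary route --- identifying the subalgebras generated by $x_k,y_k$ in $\hwp/I_{k-1}$ and $\hw/J_{k-1}$ with honest quantum planes and invoking Theorem \ref{thm.qas} --- is a legitimate repackaging; what it buys is that the coefficient analysis is done once and for all in Section \ref{sec.qas}, at the cost of the PBW-independence verification you correctly flag (the paper needs only the weaker, and equally implicit, fact that $x_k^2$, $y_k^2$, $y_kx_k$ are linearly independent modulo $J_{k-1}$, which it does not justify either). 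One small difference in the degenerate case: the paper disposes of $\gamma_k = 1$ by symmetry, running the same argument on $\Phi\inv$ to show that $\mu_k \neq 1$ would force $\gamma_k = \mu_k^{\pm 1} \neq 1$; your direct commutator computation, which yields $(ad-bc)(\mu_k - 1)y_kx_k = 0$ and hence $\mu_k = 1$, is a touch cleaner and avoids the appeal to the inverse.
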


\begin{proof}
Fix $k$, $1 \leq k \leq n$.
Suppose first that $\gamma_k \neq 1$.
By Theorem \ref{thm.wanorm}, 
$I_k = \langle X_k,Y_k \rangle$ is mapped bijectively to 
$J_k = \Phi(I_k) = \langle x_k,y_k \rangle$.
Thus, there exists $a,b,c,d \in \kk$ with $ab-cd \neq 0$ such that
\[
	\Phi(X_k) = ax_k + by_k + J_{k-1} \text{ and }
	\Phi(Y_k) = cx_k + dy_k + J_{k-1}.
\]
Then
\begin{align*}
0 
	&= \Phi(X_kY_k - \gamma_k Y_kX_k + I_{k-1}) \\
	&= (ax_k + by_k)(cx_k + dy_k) 
			- \gamma_k (cx_k + dy_k)(ax_k + by_k) + J_{k-1} \\
	&= (1-\gamma_k)(ac x_k^2 + bd y_k^2)
		+ (ad - \gamma_k bc)(x_ky_k) + (bc-\gamma_k ad) y_kx_k + J_{k-1} \\
	&= (1-\gamma_k)(ac x_k^2 + bd y_k^2)
		+ (ad - \gamma_k bc)(\mu_k y_kx_k) + (bc-\gamma_k ad) y_kx_k + J_{k-1} \\
	&= (1-\gamma_k)(ac x_k^2 + bd y_k^2)
		+ \left[ ad(\mu_k - \gamma_k) + bc(1-\mu_k\gamma_k) \right] y_kx_k + J_{k-1}.
\end{align*}
Because $\gamma_k \neq 1$, then the coefficients of $x_k^2$ and $y_k^2$
must be zero on the above. Hence, $ac=0$ and $bd=0$.
But $ad-bc \neq 0$ and so we have two cases.
In the first case, $b=c=0$ and $a,d \neq 0$ so $\mu_k = \gamma_k$.
In the second case, $a=d=0$ and $b,c \neq 0$ so $\mu_k = \gamma_k\inv$.

If $\mu_k \neq 1$, then an argument as above
shows that $\gamma_k = \mu_k^{\pm 1}$.
Thus, $\gamma_k=1$ if and only if $\mu_k = 1$.
\end{proof}

\begin{lemma}
\label{lem.hwa2}
Suppose $\Phi:\hwp \rightarrow \hw$ is an isomorphism.
Then $\bp$ and $\bq$ satisfy \eqref{eq.hwa2}.
\end{lemma}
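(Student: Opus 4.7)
The plan is to exploit the iterative quotient structure of Theorem~\ref{thm.wanorm} in tandem with the diagonal/anti-diagonal alternative from Lemma~\ref{lem.hwa1}. Since $\Phi(I_k) = J_k$ for every $k$, the isomorphism $\Phi$ descends to an isomorphism $\hwp/I_{i-1} \iso \hw/J_{i-1}$ for every $i$. By Lemma~\ref{lem.hwa1}, modulo $J_{i-1}$ the image $\Phi(X_i)$ reduces to a scalar multiple of $x_i$ when $\ep_i = 1$ and of $y_i$ when $\ep_i = -1$, with $\Phi(Y_i)$ supported on the complementary generator. An immediate consequence of Lemma~\ref{lem.hwa1}, applied in degree one, is that $\Phi(X_j)$ has no $x_l$ or $y_l$ summand for $l > j$; hence for $i < j$ we may write, modulo $J_{i-1}$, $\Phi(X_j) \equiv v_j + \rho_j$, where $v_j$ is the scalar multiple of $x_j$ or $y_j$ dictated by $\ep_j$ and $\rho_j$ lies in the span of $\{x_l, y_l : i \le l < j\}$.

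For each pair $i < j$ I would apply $\Phi$ to the defining relation $X_i X_j = \gamma_i p_{ij} X_j X_i$, reduce the resulting identity modulo $J_{i-1}$, and extract the coefficient of a specific degree-two monomial $m$ in a fixed PBW basis of $\hw/J_{i-1}$. The monomial $m$ is chosen to match the leading product $v_i v_j$ in the given sign case: $m = x_i x_j$, $x_i y_j$, $y_i x_j$, or $y_i y_j$ in the four cases $(\ep_i,\ep_j) = (1,1), (1,-1), (-1,1), (-1,-1)$ respectively. Using the quantum-commutation relations of $\hw$ together with $\mu_i = \gamma_i^{\ep_i}$ from \eqref{eq.hwa1}, the resulting scalar identity rearranges to exactly the matching case of \eqref{eq.hwa2}: $q_{ij}$ equals $p_{ij}$, $\gamma_i\inv p_{ji}$, $p_{ji}$, or $\gamma_i p_{ij}$ respectively.

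The main obstacle is verifying that the middle term $\rho_j$ contributes nothing to the coefficient of $m$. The key observation is that every monomial in $\rho_j$ has index in $[i, j-1]$, so each summand of $v_i \rho_j$ and $\rho_j v_i$, after being put into PBW normal form by the commutation relations of $\hw$ (including the modified $x_l y_l$ relation, whose correction terms are indexed by $m < l \le j - 1$), still has all of its factors indexed in $\{i, i+1, \ldots, j-1\}$ and never acquires an index-$j$ factor. In particular, none of these summands equals $m$. Once this decoupling is in hand, each sign case reduces to a direct one-line scalar computation. A minor bookkeeping issue is tracking the nonzero leading scalars supplied by Lemma~\ref{lem.hwa1}; these cancel cleanly in the final equation.
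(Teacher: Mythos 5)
Your overall strategy is the same as the paper's: use Theorem \ref{thm.wanorm} to pin down the images of the generators modulo the iterated ideals, apply $\Phi$ to a single quadratic relation for each pair $i<j$, and read off the coefficient of one normal-form monomial, with the sign dichotomy from Lemma \ref{lem.hwa1} selecting which of the four cases of \eqref{eq.hwa2} you land in. (The paper works with the relation $Y_iY_j=p_{ij}Y_jY_i$ and writes the lower-order error terms explicitly as elements of $V_k=\Span_\kk\{z,x_\ell,y_\ell \mid \ell<k\}$ rather than passing to the quotient by $J_{i-1}$, but these are cosmetic differences. Your index-support argument for why $v_i\rho_j-\gamma_ip_{ij}\rho_jv_i$ cannot produce the target monomial is sound, and your four scalar identities do reduce to the four cases of \eqref{eq.hwa2}.)

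The gap is in your opening reduction: the claim that, modulo $J_{i-1}$, $\Phi(X_i)$ is a scalar multiple of $x_i$ or $y_i$ according to $\ep_i$ is only what Lemma \ref{lem.hwa1} delivers when $\gamma_i\neq 1$. Its proof needs $\gamma_i\neq 1$ to force $ac=bd=0$ from the coefficients of $x_i^2$ and $y_i^2$; when $\gamma_i=1$ the coefficients $a,b,c,d$ can form an arbitrary invertible matrix, both signs $\ep_i=\pm1$ satisfy \eqref{eq.hwa1} vacuously, and $v_i$ need not be supported on a single generator. Your argument as written therefore does not cover any pair $i<j$ with $\gamma_i=1$ or $\gamma_j=1$. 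The paper treats this case separately: one keeps the full combination $ax_i+by_i$ (and likewise for $j$), extracts the coefficients of all four monomials $x_jx_i$, $y_jx_i$, $x_jy_i$, $y_jy_i$, and finds that the several nonzero products among $ac,ad,bc,bd$ impose simultaneous constraints (for instance $q_{ij}=p_{ij}=q_{ji}$, forcing $q_{ij}=\pm1$) which remain consistent with \eqref{eq.hwa2}. You need to add this degenerate case to complete the proof.
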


\begin{proof}
Let $V_k = \Span_\kk \{z,x_\ell,y_\ell \mid \ell < k\}$.
Because $\Phi$ is assumed to be graded, 
then it follows from Theorem \ref{thm.wanorm} that for $i < j$, 
\[
\Phi(Y_i) = ax_i + by_i + K
\quad\text{and}\quad
\Phi(Y_j) = cx_j + dy_j + L\]
for some $K \in V_i$ and $L \in V_j$.

We have,
\begin{align*}
0 	&= \Phi(Y_iY_j-p_{ij}Y_jY_i) \\
	&= (ax_i + by_i + K)(cx_j + dy_j + L) - p_{ij}(cx_j + dy_j + L)(ax_i + by_i + K) \\
	&= ac(x_ix_j-p_{ij}x_jx_i) + ad(x_iy_j-p_{ij}y_jx_i)
		+ bc(y_ix_j-p_{ij}x_jy_i) + bd(y_iy_j-p_{ij}y_jy_i) \\ 
	&\hspace{5em}	+ (ax_i + by_i + K)L-p_{ij}L(ax_i + by_i + K) \\
	&= ac(\mu_iq_{ij}-p_{ij})x_jx_i + ad(q_{ji}-p_{ij})y_jx_i
		+ bc(1-p_{ij}\mu_iq_{ij})x_jy_i + bd(q_{ij}-p_{ij})y_jy_i \\ 
	&\hspace{5em}	+ (ax_i + by_i + K)L-p_{ij}L(ax_i + by_i + K).
\end{align*}
The defining relations clearly imply that the monomials $x_jx_i$,
$y_jy_i$, $x_jy_i$, and $y_jy_i$ do not appear as summands of
$(ax_i + by_i + K)L-p_{ij}L(ax_i + by_i + K)$.
Hence, the coefficients of those monomials in the above must
each be zero.

Suppose $\gamma_i,\gamma_j \neq 1$.
By the proof of Lemma \ref{lem.hwa1},
exactly one of $ac,ad,bc,bd$ is nonzero,
corresponding to the signs of the exponents in
$\gamma_i = \mu_i^{\pm 1}$ and $\gamma_j = \mu_j^{\pm 1}$.
If $\gamma_i=\mu_i$ and $\gamma_j=\mu_j$,
then $a,d \neq 0$ and so $q_{ij}=p_{ij}$.
The other cases are similar.

If $\gamma_i=1$ or $\gamma_j=1$, 
then the argument is similar. 
However, we get additional restrictions on the parameters.
For example, if $\gamma_i=1$ and $\gamma_j \neq 1$,
then we may have $ad,bd \neq 0$ implying $q_{ij}=p_{ij}=q_{ji}$,
so $q_{ij}=\pm 1$.
\end{proof}

\begin{theorem}
\label{thm.hwaiso}
Let $\gamma,\mu \in (\kk^\times)^n$ and $\bp,\bq \in \cA_n$
Then $\Phi:\hwp \rightarrow \hwm$ is an isomorphism 
if and only if $n=m$ and there exists $\ep \in \{\pm 1\}^n$ such that
$\gamma$ and $\mu$ satisfy \eqref{eq.hwa1}
while $\bp$ and $\bq$ satisfy \eqref{eq.hwa2}.
\end{theorem}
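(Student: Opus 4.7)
The plan is to split the equivalence into its two directions and invoke the preceding technical results. For the forward direction, suppose $\Phi : \hwp \to \hwm$ is an isomorphism. By Corollary \ref{cor.hwprops}, $\hwp$ and $\hwm$ have GK dimensions $2n+1$ and $2m+1$, so $n = m$ immediately. Lemma \ref{lem.hwa1} then produces, for each $k$, a sign $\ep_k \in \{\pm 1\}$ with $\mu_k = \gamma_k^{\ep_k}$, which is exactly \eqref{eq.hwa1}. Applying Lemma \ref{lem.hwa2} with this same $\ep$ delivers \eqref{eq.hwa2}.

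For the converse, given $\ep \in \{\pm 1\}^n$ and parameters satisfying \eqref{eq.hwa1} and \eqref{eq.hwa2}, I would construct an explicit graded isomorphism $\Phi$ by homogenizing the Goodearl--Hartwig isomorphisms of \cite[Theorem 5.1]{GH}, which are defined on generators and hold independently of the root-of-unity hypothesis. The lift sends the homogenizing variable $Z \mapsto z$ and, for each $i$, either $X_i \mapsto \alpha_i x_i$, $Y_i \mapsto \beta_i y_i$ (when $\ep_i = 1$) or $X_i \mapsto \alpha_i y_i$, $Y_i \mapsto \beta_i x_i$ (when $\ep_i = -1$), with scalars $\alpha_i, \beta_i$ taken from \cite{GH}. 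Preservation of the commutation relations among the $X_i$, among the $Y_i$, and of the mixed relations $X_iY_j$ for $i \neq j$ reduces to \eqref{eq.hwa2} by a direct case check on $(\ep_i,\ep_j)$, essentially reversing the computation of Lemma \ref{lem.hwa2}.

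The principal obstacle is verifying that the diagonal relation
\[
X_jY_j = Z^2 + \gamma_j Y_jX_j + \sum_{l<j}(\gamma_l-1)Y_lX_l
\]
maps to an identity in $\hwm$, because the tail couples every earlier index and must be rewritten whenever $\ep_l = -1$: in that case $(\gamma_l - 1)Y_lX_l$ becomes a scalar multiple of $(\mu_l^{-1}-1) x_ly_l$, and using the target relation at level $l$ to re-expand $x_ly_l$ reintroduces a $z^2$-term together with a further lower tail. I would handle this by induction on $j$, showing that the scalars $\alpha_i\beta_i$ coming from \cite{GH} are precisely those that force all introduced $z^2$-contributions to collapse to a single $z^2$ on the right and the lower-tail contributions to telescope correctly. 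The $n = 1$ case of the preceding proposition (the map $X \mapsto -qy$, $Y \mapsto x$, $Z \mapsto z$ when $p = q^{-1}$) supplies the base step, and the inductive step is a book-keeping computation on the sign-flipped indices.
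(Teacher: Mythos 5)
Your proof follows the same route as the paper: the forward direction combines Corollary \ref{cor.hwprops} (GK dimension gives $n=m$) with Lemmas \ref{lem.hwa1} and \ref{lem.hwa2}, and the converse extends the Goodearl--Hartwig isomorphisms of \cite{GH} by fixing the homogenizing variable. The paper dispatches the converse in a single sentence by observing that those isomorphisms are affine and hence lift; your more detailed inductive verification of the diagonal relation is sound extra book-keeping but not a different approach.
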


\begin{proof}
Suppose $\hwp \iso \hwm$.
By Corollary \ref{cor.hwprops}, 
$2n+1=\gk(\hwp) = \gk(\hwm)=2m+1$, so $n=m$.
By Lemma \ref{lem.hwa1}, $\gamma$ and $\mu$ satisfy \eqref{eq.hwa1}
and by Lemma \ref{lem.hwa2}, $\bp$ and $\bq$ satisfy \eqref{eq.hwa2}.

Conversely, suppose there exists $\ep \in \{\pm 1\}^n$ such that
$\lambda$ and $\mu$ satisfy \eqref{eq.hwa1}
while $\bp$ and $\bq$ satisfy \eqref{eq.hwa2}.
The isomorphisms provided in \cite{GH}
are affine and hence extend to bijective
homomorphisms $\hwp \rightarrow \hw$. 
\end{proof}

\section{Quantum matrix algebras}

Throughout, let $\{X_{ij}\}$ and $\{Y_{ij}\}$ be 
the standard generators for 
$\pma$ and $\qmamu$, respectively.
As in Section \ref{sec.deg1},
we let $I_k/I_{k-1}$ be the ideal 
in $\pma/I_{k-1}$ generated by 
the degree one normal elements.

\begin{proposition}
\label{prop.trans}
For $i>j$, let $p_{ij}=\lambda\inv q_{ji}$.
The map $\Phi:\pma \rightarrow \qma$ defined by 
$\Phi(X_{ij})=Y_{ji}$ extends to an isomorphism.
\end{proposition}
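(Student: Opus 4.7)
The plan is to verify that $\Phi$ preserves the defining relations of $\pma$, and then produce an explicit inverse via a symmetry of the hypothesis. Before starting the case analysis I would record a consequence of the hypothesis: combining $p_{ij} = \lambda^{-1}q_{ji}$ (for $i > j$) with the antisymmetry of $\bp$ and $\bq$ yields $p_{ij} = \lambda q_{ji}$ for $i < j$ and, by the same derivation applied to the pair $(\bq,\bp)$, the symmetric identity $q_{ij} = \lambda^{-1} p_{ji}$ for $i > j$. This last identity is exactly the hypothesis of the proposition with the roles of $\bp$ and $\bq$ reversed, and is what we will need at the end to invoke symmetry.

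The main step is to check the three defining relations of $\pma$ under $\Phi$. Given a relation involving $X_{lm}X_{ij}$, one has $\Phi(X_{lm}X_{ij}) = Y_{ml}Y_{ji}$, and the key observation is that the index inequalities defining the case in $\pma$ are preserved in the case governing $Y_{ml}Y_{ji}$ in $\qma$, merely with ``row'' and ``column'' roles interchanged. The case $l = i,\ m > j$ is a one-term identity that reduces to $p_{jm} = \lambda q_{mj}$, which holds since $j < m$. The case $l > i,\ m \leq j$ splits into $m = j$ and $m < j$; each subcase is again a one-term identity verified by applying the hypothesis to $p_{li}$ and $p_{jm}$.

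The main obstacle is the two-term case $l > i,\ m > j$. Applying $\Phi$ to the defining relation gives
\[
Y_{ml}Y_{ji} = p_{li}p_{jm}Y_{ji}Y_{ml} + (\lambda-1)p_{li}Y_{mi}Y_{jl},
\]
while the $\qma$-relation for $Y_{ml}Y_{ji}$ (same case, since $m > j$ and $l > i$) reads $Y_{ml}Y_{ji} = q_{mj}q_{il}Y_{ji}Y_{ml} + (\lambda-1)q_{mj}Y_{jl}Y_{mi}$. Substituting $p_{li} = \lambda^{-1}q_{il}$ and $p_{jm} = \lambda q_{mj}$ matches the leading coefficients. For the second term one must rearrange $Y_{mi}Y_{jl}$ using a further $\qma$-relation (the second case, applied with $m > j$ and $i < l$), which yields $Y_{mi}Y_{jl} = \lambda q_{mj}q_{li}Y_{jl}Y_{mi}$. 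Multiplying by $(\lambda-1)p_{li} = (\lambda-1)\lambda^{-1}q_{il}$ and collapsing the factor $q_{il}q_{li} = 1$ produces exactly $(\lambda-1)q_{mj}Y_{jl}Y_{mi}$. Careful bookkeeping of the $\lambda$-factors is the only nontrivial point.

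For bijectivity I would invoke the symmetry noted in the first paragraph: the identical argument, applied with $\bp$ and $\bq$ swapped, shows that $\Psi : \qma \to \pma$ sending $Y_{ij} \mapsto X_{ji}$ is also a well-defined homomorphism. Since $\Psi\Phi$ fixes every $X_{ij}$ and $\Phi\Psi$ fixes every $Y_{ij}$, both compositions are the identity on generators, hence on the algebras, so $\Phi$ is an isomorphism.
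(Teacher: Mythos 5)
Your proposal is correct and follows essentially the same route as the paper: a case-by-case verification that $\Phi$ preserves the three types of defining relations, with the two-term case $l>i$, $m>j$ handled exactly as in the paper by rewriting $Y_{mi}Y_{jl}$ via the second $\qma$-relation and cancelling $q_{il}q_{li}=1$. The only divergence is the bijectivity step, where you construct an explicit inverse $Y_{ij}\mapsto X_{ji}$ from the derived symmetry $q_{ij}=\lambda^{-1}p_{ji}$ for $i>j$, whereas the paper argues surjectivity onto generators plus injectivity from the fact that both algebras are domains of equal GK dimension $n^2$; both arguments are valid.
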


\begin{proof}
We claim the map $\Phi$ satisfies the defining relations for $\pma$,
whence $\Phi$ extends to a homomorphism.
As $\Phi$ maps onto the generators of $\qma$,
it is clearly surjective.
Injectivity now follows because $\pma$ and $\qmamu$
are domains and $\gk(\pma) = n^2 = \gk(\qmamu)$.

Case 1 ($l=i$, $m>j$)
\[
\Phi(X_{im})\Phi(X_{ij}) - p_{jm}\Phi(X_{ij})\Phi(X_{im})
	= Y_{mi}Y_{ji} - \lambda q_{mj} Y_{ji}Y_{mi} 
	= (\lambda q_{mj}q_{ii} Y_{ji}Y_{mi}) - \lambda q_{mj} Y_{ji}Y_{mi} 
	= 0.
\]

Case 2 ($l>i$, $m < j$)
\begin{align*}
\Phi(X_{lm})\Phi(X_{ij}) - \lambda p_{li} p_{jm}\Phi(X_{ij})\Phi(X_{lm})
	&= Y_{ml}Y_{ji} - \lambda (\lambda\inv q_{il})(\lambda\inv q_{mj}) Y_{ji}Y_{ml} \\
	&= Y_{ml}Y_{ji} - \lambda\inv q_{il} q_{mj} (\lambda q_{jm} q_{li} 
	Y_{ml}Y_{ji}) = 0.
\end{align*}

Case 3 ($l>i$, $m=j$)
\begin{align*}
\Phi(X_{lj})\Phi(X_{ij}) - \lambda p_{li} \Phi(X_{ij})\Phi(X_{lm})
	&= Y_{jl}Y_{ji} - \lambda (\lambda\inv q_{il}) Y_{ji}Y_{jl}
	= (q_{il} Y_{ji}Y_{jl}) - q_{il} Y_{ji}Y_{jl}
	= 0.
\end{align*}

Case 4 ($l>i$, $m>j$)
\begin{align*}
\Phi(X_{lm})\Phi(X_{ij}) &- p_{li} p_{jm}\Phi(X_{ij})\Phi(X_{lm}) 
		+ (\lambda-1)p_{li}X_{im}X_{lj}) \\
	&= Y_{ml}Y_{ji} - (\lambda\inv q_{il})(\lambda q_{mj}) Y_{ji}Y_{ml} 
		+ (\lambda-1)(\lambda\inv q_{il}) Y_{mi}Y_{jl} \\
	&= (q_{mj}q_{il} Y_{ji}Y_{ml} + (\lambda-1)q_{mj}Y_{jl}Y_{mi}) 
		- q_{il}q_{mj} Y_{ji}Y_{ml} 
		+ (\lambda-1)(\lambda\inv q_{il}) (\lambda q_{mj}q_{li}) Y_{jl}Y_{mi} = 0.
\qedhere
\end{align*}
\end{proof}

\begin{proposition}
\label{prop.flip}
Let $p_{ij}=q_{n+1-i,n+1-j}$ and $\lambda=\mu\inv$.
The map $\Phi:\pma \rightarrow \qmamu$ defined by 
$\Phi(X_{ij})=Y_{n+1-i,n+1-j}$ extends to an isomorphism.
\end{proposition}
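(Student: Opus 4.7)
The plan is to follow the template of Proposition \ref{prop.trans} very closely: check that $\Phi$ respects each of the three families of defining relations of $\pma$, giving a surjective homomorphism, and then use that $\pma$ and $\qmamu$ are both domains with $\gk(\pma)=n^2=\gk(\qmamu)$ to promote it to an isomorphism. Injectivity is immediate from the domain/GK-dimension argument already used in the earlier proposition, so the real work is the case analysis.

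To organize the computation, set $\bar i = n+1-i$. The key structural observation is that the reversal $i\mapsto\bar i$ reverses order ($l>i$ iff $\bar l<\bar i$), while the hypothesis $p_{ij}=q_{\bar i\bar j}$ converts $\bp$-scalars in $\pma$ to $\bq$-scalars in $\qmamu$ after reindexing. The net effect when applying $\Phi$ is that the roles of the ``large'' pair $(l,m)$ and the ``small'' pair $(i,j)$ get swapped in $\qmamu$. For the two relations without a correction term (namely $l=i,m>j$ and $l>i,m\leq j$), verification then reduces to a single substitution into the corresponding $\qmamu$ relation, with $\mu\inv=\lambda$ supplying the needed scalar; these should be direct analogues of Cases 1--3 of Proposition \ref{prop.trans}.

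The only delicate case is $l>i,m>j$, which carries the $(\lambda-1)p_{li}X_{im}X_{lj}$ correction term. My plan here is to start from the first-type $\qmamu$ relation on the swapped indices, which expresses $Y_{\bar i\bar j}Y_{\bar l\bar m}$ in terms of $Y_{\bar l\bar m}Y_{\bar i\bar j}$ and $Y_{\bar l\bar j}Y_{\bar i\bar m}$, solve for $Y_{\bar l\bar m}Y_{\bar i\bar j}$, and then invoke a second-type $\qmamu$ relation to rewrite $Y_{\bar l\bar j}Y_{\bar i\bar m}$ as a scalar multiple of $Y_{\bar i\bar m}Y_{\bar l\bar j}$, the $\Phi$-image of $X_{im}X_{lj}$. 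The key arithmetic identity will be $-(\mu-1)\mu\inv=\mu\inv-1=\lambda-1$, so the correction coefficient emerges with exactly the required sign. The main (minor) obstacle is bookkeeping: tracking the overbars and keeping signs straight so that the coefficient of the $Y_{\bar i\bar m}Y_{\bar l\bar j}$ term comes out to $\lambda-1$ rather than $1-\lambda$.
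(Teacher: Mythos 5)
Your proposal is correct and follows essentially the same route as the paper: a direct case-by-case verification of the three relation types under the index reversal $i \mapsto n+1-i$, with injectivity/surjectivity handled exactly as in Proposition \ref{prop.trans}, and the correction-term case resolved by the same scalar identity $-(\mu-1)\mu\inv = \lambda - 1$ that makes the paper's Case 3 coefficient vanish. No substantive differences.
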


\begin{proof}
We claim the map $\Phi$ satisfies the defining relations for $\pma$,
whence $\Phi$ extends to a bijective homomorphism by an analogous
argument as in Proposition \ref{prop.trans}.

Set $r=n+1-l$, $s=n+1-m$, $u=n+1-i$, and $v=n+1-j$.

Case 1 ($l=i, m>j$) We have $r=u$ and $s>v$. Thus,
\[ \Phi(X_{lm})\Phi(X_{ij}) - p_{jm}\Phi(X_{ij})\Phi(X_{lm}) 
		= Y_{rs}Y_{uv} - p_{jm}Y_{uv}Y_{rs}
		= (q_{vs}-p_{jm})Y_{uv}Y_{rs} = 0.
\]

Case 2 ($l>i$, $m \leq j$) We have $r<u$ and $v\leq s$. Thus,
\[ \Phi(X_{lm})\Phi(X_{ij}) - \lambda p_{li}p_{jm} \Phi(X_{ij})\Phi(X_{lm}) 
		= Y_{rs}Y_{uv} - \lambda p_{li}p_{jm} Y_{uv}Y_{rs}
		= (1-\lambda p_{li}p_{jm}\mu q_{ur}q_{sv})Y_{rs}Y_{uv} = 0.
\]

Case 3 ($l>i$, $m>j$) We have $r<u$ and $s < v$. Thus,
\begin{align*}
	\Phi(X_{lm})\Phi(X_{ij}) &- p_{li}p_{jm} \Phi(X_{ij})\Phi(X_{lm}) - (\lambda-1)p_{li}\Phi(X_{im})\Phi(X_{lj}) \\
		&= Y_{rs}Y_{uv} - p_{li}p_{jm} Y_{uv}Y_{rs} - (\lambda-1)p_{li}Y_{us}Y_{rv} \\
		&= Y_{rs}Y_{uv} - p_{li}p_{jm} (q_{ur}q_{sv}Y_{rs}Y_{uv} +(\mu-1)q_{ur}Y_{rv}Y_{us}) - (\lambda-1)p_{li}(\mu q_{ur}q_{vs})Y_{rv}Y_{us} \\
		&= -p_{li}q_{ur} ((\mu-1)p_{jm} + (\lambda-1)\mu q_{vs}) =0. \qedhere
\end{align*}
\end{proof}

The key question is whether there are any more types of isomorphisms.

\begin{lemma}
\label{lem.sum}
Suppose $\Phi:\pma \rightarrow \qmamu$ is a graded isomorphism.

(1) If $p_{jm} \neq 1$, 
then $\Phi(X_{ij})$ and $\Phi(X_{im})$
do not share any summands.

(2) If $\lambda p_{li} \neq 1$, 
then $\Phi(X_{ij})$ and $\Phi(X_{lj})$
do not share any summands.

(3) For all $i,j$, $\Phi(X_{ij})$ and $\Phi(X_{ji})$ do not share
any summands.
\end{lemma}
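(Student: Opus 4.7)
The plan for all three parts is uniform: apply $\Phi$ to the single defining relation of $\pma$ that couples the two generators in question, expand both sides in the PBW-style monomial basis of $\qmamu$, and read off the coefficient of an arbitrary squared generator $Y_{ab}^2$. The crucial bookkeeping observation is that none of the defining relations of $\qmamu$ produces a $Y^2$ term on the right-hand side: the two ``pure skew-commutation'' rewritings give scalar multiples of $Y_{ij}Y_{lm}$, and the Manin cross term $Y_{im}Y_{lj}$ appearing in the remaining rewriting has two distinct factors since $l > i$ and $m > j$. Consequently, for any $u = \sum u_{ab}Y_{ab}$ and $v = \sum v_{ab}Y_{ab}$, the coefficient of $Y_{ab}^2$ in the normal form of $uv$ is simply $u_{ab}v_{ab}$.

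For each part, the relevant defining relation of $\pma$ has the form $WV = cVW$ with $V = X_{ij}$, $W$ the other generator, and $c \in \kk$. Writing $\Phi(V) = \sum \alpha_{ab}Y_{ab}$ and $\Phi(W) = \sum \beta_{ab}Y_{ab}$, the transported identity $\Phi(W)\Phi(V) - c\,\Phi(V)\Phi(W) = 0$ together with the above observation forces $(1-c)\alpha_{ab}\beta_{ab} = 0$ for every $(a,b)$. For (1), the relation $X_{im}X_{ij} = p_{jm}X_{ij}X_{im}$ (without loss of generality $m > j$; the case $m < j$ is symmetric) gives $c = p_{jm}$, and the hypothesis $p_{jm} \neq 1$ forces $\alpha_{ab}\beta_{ab} = 0$. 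For (2), the relation $X_{lj}X_{ij} = \lambda p_{li}X_{ij}X_{lj}$ (WLOG $l > i$) gives $c = \lambda p_{li}$, closed by $\lambda p_{li} \neq 1$. For (3) the statement is vacuous when $i = j$, and for $i < j$ the unique defining relation coupling $X_{ij}$ and $X_{ji}$ is $X_{ji}X_{ij} = \lambda p_{ji}^2 X_{ij}X_{ji}$, yielding $c = \lambda p_{ji}^2$.

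The main obstacle is the sub-case of (3) in which $\lambda p_{ji}^2 = 1$, where the $Y_{ab}^2$-coefficient identically vanishes and the $Y^2$-trick gives no information. To handle it, I would extract the off-diagonal coefficient of $Y_{ab}Y_{cd}$ for $(a,b) < (c,d)$ lexicographically in the same identity. Reducing the ``wrong-order'' product $Y_{cd}Y_{ab}$ via the $\qmamu$ relations produces $q_{ca}q_{bd}Y_{ab}Y_{cd}$ together with a Manin cross term $Y_{ad}Y_{cb}$, and a careful accounting yields proportionality identities of the form $(1-q_{ca}q_{bd})(\beta_{ab}\alpha_{cd} - \alpha_{ab}\beta_{cd}) = 0$. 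Combining these proportionalities with parts (1) and (2) applied to well-chosen third generators $X_{ik}$ and $X_{kj}$ should then eliminate the hypothetical shared summand $Y_{ab}$, contradicting injectivity of $\Phi$. Keeping track of which auxiliary scalars $q_{ca}q_{bd}$, $p_{jk}$, $\lambda p_{ki}$ are forced to equal $1$ by the constraint $\lambda p_{ji}^2 = 1$ is where the delicate technical work lies.
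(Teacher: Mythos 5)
Parts (1) and (2) of your argument coincide with the paper's: apply $\Phi$ to the in-row (resp.\ in-column) skew-commutation relation, observe that no reduction in $\qmamu$ ever creates a squared monomial (the Manin cross term $Y_{im}Y_{lj}$ always has distinct factors), and read off the coefficient of $Y_{uv}^2$ as $(1-c)\alpha_{uv}\beta_{uv}$ with $c=p_{jm}$, resp.\ $c=\lambda p_{li}$. Those parts are correct.

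Part (3) has a genuine gap. The relation $X_{ji}X_{ij}=\lambda p_{ji}^2X_{ij}X_{ji}$ only yields $(1-\lambda p_{ji}^2)\alpha_{uv}\beta_{uv}=0$, which says nothing when $\lambda p_{ji}^2=1$, i.e.\ $p_{ij}^2=\lambda$. Nothing in the standing hypotheses rules this out (only $\lambda\neq\pm1$ is assumed, and $p_{ij}$ is an arbitrary unit), so this sub-case must be handled, and your proposed rescue is a plan rather than a proof: the auxiliary relations you invoke carry their own degeneracy conditions ($p_{jk}=1$, $\lambda p_{ki}=1$, $q_{ca}q_{bd}=1$, \dots), and you have not shown these cannot all hold at once. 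Moreover, your premise that $X_{ji}X_{ij}=\lambda p_{ji}^2X_{ij}X_{ji}$ is the \emph{unique} relation coupling $X_{ij}$ and $X_{ji}$ is false, and that is exactly the way out: for $i>j$, taking $(l,m)=(i,i)$ against $(j,j)$ in the defining relations gives $X_{ii}X_{jj}-X_{jj}X_{ii}=(\lambda-1)p_{ij}X_{ji}X_{ij}$ (using $p_{ij}p_{ji}=1$), since $X_{ji}X_{ij}$ occurs as the Manin cross term. The commutator on the left contributes $0$ to the coefficient of $Y_{uv}^2$, so that coefficient in the image of the whole relation is $-(\lambda-1)p_{ij}\alpha_{uv}\beta_{uv}$, and $\lambda\neq1$ is guaranteed by the standing assumption. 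Substituting this relation for yours in part (3) closes the argument uniformly, with no case split; this is what the paper does.
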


\begin{proof}
(1) WLOG, assume $m > j$.
Suppose $Y_{uv}$ is a summand of 
both $\Phi(X_{ij})$ and $\Phi(X_{im})$
with coefficient $a$ and $b$, respectively.
The coefficient of $Y_{uv}^2$ in 
$0=\Phi(X_{im}X_{ij}-p_{jm}X_{ij}X_{lm})$ is $ab(1-p_{jm})$.
Hence, $a=0$ or $b=0$, a contradiction.
(2) is similar.

(3) WLOG, assume $i>j$.
Suppose $Y_{uv}$ is a summand of 
both $\Phi(X_{ij})$ and $\Phi(X_{ji})$
with coefficient $a$ and $b$, respectively.
The coefficient of $Y_{uv}^2$ in
$0=\Phi(X_{ii}X_{jj}-X_{jj}X_{ii}-(\lambda-1)p_{ij}X_{ji}X_{ij})$
is $-(\lambda-1)p_{ij}ab$. 
Hence, $a=0$ or $b=0$, a contradiction.
\end{proof}

\begin{lemma}
\label{lem.lammu}
$\pma \iso \qmamu$ implies $\lambda=\mu^{\pm 1}$.
\end{lemma}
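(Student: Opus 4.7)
The plan is to apply $\Phi$ to the simplest relation in $\pma$ that exhibits $\lambda$ multiplicatively, namely the commutator
\[X_{22}X_{11} - X_{11}X_{22} = -(\lambda - 1)p_{21}X_{12}X_{21},\]
and then compare both sides in $\qmamu$ using the structural information from Theorem \ref{thm.pma} and Lemma \ref{lem.sum}.

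First, by Theorem \ref{thm.pma}, $\Phi(I_{n-2}) = J_{n-2}$, so $\Phi$ descends to a linear isomorphism between the commutative polynomial quotients $\pma/I_{n-2} \iso \qmamu/J_{n-2}$ on the diagonal generators. Thus I may write $\Phi(X_{ii}) = D_i + F_i$ with $D_i = \sum_j a_{ij}Y_{jj}$ a diagonal combination and $F_i \in J_{n-2}$ a combination of off-diagonal $Y_{kl}$'s. Similarly $\Phi(X_{12}),\Phi(X_{21}) \in J_{n-2}$, and by Lemma \ref{lem.sum}(3) their supports among the $Y_{kl}$ are disjoint. I would then expand
\[[\Phi(X_{22}),\Phi(X_{11})] = [D_2,D_1] + [D_2,F_1] + [F_2,D_1] + [F_2,F_1],\]
and compute the principal term $[D_2,D_1]$ using the $\qmamu$-relation $[Y_{kk},Y_{jj}] = (\mu-1)q_{kj}Y_{jk}Y_{kj}$ for $j < k$, obtaining
\[\sum_{j<k}(a_{2k}a_{1j} - a_{2j}a_{1k})(\mu - 1)q_{kj}Y_{jk}Y_{kj},\]
whose coefficients are $2\times 2$ minors of $(a_{ij})$.

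Next, I would equate this with the right-hand side $-(\lambda-1)p_{21}\Phi(X_{12})\Phi(X_{21})$ and isolate the coefficient of a particular $Y_{jk}Y_{kj}$ monomial. Writing $\Phi(X_{12}) = \sum c_{kl}Y_{kl}$ and $\Phi(X_{21}) = \sum d_{kl}Y_{kl}$, the disjointness of supports forces at most one of $c_{jk}d_{kj}$ and $c_{kj}d_{jk}$ to be nonzero, producing a clean identity of the form $(1-\mu)M = (1-\lambda)E$, where $M$ is a $2\times 2$ minor of $(a_{ij})$ and $E$ is a single product of expansion coefficients. Applying the analogous argument to $\Phi^{-1}:\qmamu \to \pma$ yields the reverse identity. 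Since $\lambda,\mu \neq 1$ by the assumption carried from before Theorem \ref{thm.pma}, together these two identities force $\lambda = \mu^{\pm 1}$, with the sign reflecting whether $\Phi$ roughly preserves or reverses the ordering of the diagonal, as in the case analysis for the $n=2$ case treated in \cite[Proposition 4.2]{G}.

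The hard part will be controlling the cross-terms $[D_i,F_j]$ and $[F_2,F_1]$ in the commutator expansion, which a priori can also contribute to the coefficient of $Y_{jk}Y_{kj}$ (via the $(\mu-1)$-correction in the $\qmamu$-relation between $Y_{jj}$ and a general off-diagonal $Y_{kl}$). Bounding these contributions requires combining the iterative ideal structure from Theorem \ref{thm.pma} with the summand-disjointness constraints of Lemma \ref{lem.sum}, so that the matching of $Y_{jk}Y_{kj}$ coefficients survives intact and isolates the $\lambda$-versus-$\mu$ comparison.
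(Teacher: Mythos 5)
Your approach diverges from the paper's and, as written, has two gaps that I do not see how to close. The paper does not work with $X_{12}$, $X_{21}$ and the diagonal generators; it works with the corner generators $X_{1n}$, $X_{n1}$, precisely because Theorem \ref{thm.pma} forces $\Phi(X_{1n})$ and $\Phi(X_{n1})$ to be supported on $\{Y_{1n},Y_{n1}\}$ alone. By contrast, $X_{12}$ and $X_{21}$ lie only in the top layer $I_{n-2}$, so $\Phi(X_{12})$, $\Phi(X_{21})$, and the off-diagonal tails $F_1,F_2$ of $\Phi(X_{11})$, $\Phi(X_{22})$ may a priori involve \emph{every} off-diagonal $Y_{kl}$. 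The monomial $Y_{jk}Y_{kj}$ you want to isolate receives contributions not only from $[D_2,D_1]$ and from $\Phi(X_{12})\Phi(X_{21})$, but also from $[F_2,F_1]$ (any product $Y_{ab}Y_{cd}$ with row multiset $\{j,k\}$ and column multiset $\{k,j\}$ rewrites onto $Y_{jk}Y_{kj}$), and nothing in Theorem \ref{thm.pma} or Lemma \ref{lem.sum} constrains the supports of $F_1,F_2$ enough to kill these. You correctly flag this as ``the hard part,'' but flagging it is not resolving it; this is where the argument actually lives.

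The second, more structural problem is that even if you obtained the clean identity $(1-\mu)M=(1-\lambda)E$ together with its mirror $(1-\lambda)M'=(1-\mu)E'$ from $\Phi\inv$, these do not force $\lambda=\mu^{\pm1}$: multiplying them just cancels the factors $(1-\lambda)(1-\mu)$ and leaves $MM'=EE'$, a relation among unknown scalars built from the $a_{ij}$, $c_{kl}$, $d_{kl}$ and the multiplicatively antisymmetric parameters. The paper's mechanism is different: it applies $\Phi$ to the two $q$-commutation relations $X_{1n}X_{11}=p_{1n}X_{11}X_{1n}$ and $X_{n1}X_{11}=\lambda p_{n1}X_{11}X_{n1}$, in which $\lambda$ appears as an eigenvalue, and observes that the \emph{same} coefficient $a_1$ of $Y_{11}$ in $\Phi(X_{11})$ governs both; if $a_1\neq 0$ one gets $q_{1n}=p_{1n}$ and $\mu q_{n1}=\lambda p_{n1}$ simultaneously, whence $\lambda=\mu$ (and the coefficient $b_1$ of $Y_{nn}$ similarly yields $\lambda=\mu\inv$). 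When all of $a_1,a_2,b_1,b_2$ vanish, the paper falls back on the relations $X_{11}X_{nn}-X_{nn}X_{11}+(\lambda-1)p_{n1}X_{1n}X_{n1}=0$ and $X_{n1}X_{1n}=\lambda p_{n1}^2X_{1n}X_{n1}$ to pin down $p_{n1},q_{n1}$ and conclude $\lambda p_{n1}^2=\mu q_{n1}^2$. Your proposal contains neither the corner-element control nor the paired-relation comparison, so I would regard it as having a genuine gap rather than being an alternative proof.
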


\begin{proof}
By Theorem \ref{thm.pma}, the degree one normal elements of $\pma$
(resp. $\qmamu$) are $X_{1n}$ and $X_{n1}$ (resp. $Y_{1n}$ and $Y_{n1}$).
Consequently, $\Phi(X_{1n})$ and $\Phi(X_{n1})$ 
are linear combinations of $Y_{1n},Y_{n1}$. Write
\begin{align*}
\Phi(X_{11}) &= a_1 Y_{11} + b_1 Y_{nn} + c_1 Y_{1n} + d_1 Y_{n1} + \bx,
& & \Phi(X_{1n}) = c_3 Y_{1n} + d_3 Y_{n1}, \\
\Phi(X_{nn}) &= a_2 Y_{11} + b_2 Y_{nn} + c_2 Y_{1n} + d_2 Y_{n1} + \by ,
& & \Phi(X_{n1}) = c_4 Y_{1n} + d_4 Y_{n1}.
\end{align*}
where $a_i,b_i,c_i,d_i \in \kk$
and $\bx,\by$ are linear terms not containing 
$Y_{11},Y_{nn},Y_{1n},Y_{n1}$ as summands.
By Proposition \ref{prop.flip} and Lemma \ref{lem.sum} (3),
we can reduce to the case $d_3=c_4=0$. Then
\begin{align*}
0	&= \Phi(X_{1n}X_{11} - p_{1n}X_{11}X_{1n}) \\
	&= c_3 \left[ a_1(q_{1n}-p_{1n}) Y_{11}Y_{1n}
		+ b_1(1- \mu p_{1n}q_{n1}) Y_{1n}Y_{nn} \right. \\
	&\quad + \left. c_1(1-p_{1n}) Y_{1n}^2
		+ d_1(\mu q_{1n}^2 - p_{1n}) Y_{1n}Y_{n1} \right] \\
	&\quad + \text{(additional terms not in $Y_{11}Y_{1n}, Y_{1n} Y_{nn}, 
				Y_{1n}^2, Y_{1n}Y_{n1}$)}, \\
0	&= \Phi(X_{n1}X_{11} - \lambda p_{n1}X_{11}X_{n1}) \\
	&= d_4 \left[ a_1 (\mu q_{n1}-\lambda p_{n1}) Y_{11}Y_{n1}
		+ b_1(1-\lambda p_{n1}q_{1n}) Y_{n1}Y_{nn}  \right. \\
	&\quad + \left. c_1 (1 - \lambda \mu p_{n1}q_{n1}^2) Y_{1n}Y_{n1}
		+ d_1(1-\lambda p_{n1})Y_{n1}^2  \right] \\
	&\quad + \text{(additional terms not in $Y_{11}Y_{n1}, Y_{11} Y_{1n}, 
				Y_{1n}Y_{n1}, Y_{n1}^2$)}.
\end{align*}
If $a_1\neq 0$, then $q_{1n}=p_{1n}$ and $\mu q_{n1}=\lambda p_{n1}$,
so $\mu=\lambda$. 
If $b_1 \neq 0$, then $1=\mu p_{1n}q_{n1}$ and $1=\lambda p_{n1}q_{1n}$,
so $\mu p_{1n}q_{n1} = \lambda p_{n1}q_{1n}$ implies $\lambda=\mu\inv$.
A similar argument using $X_{nn}$ in place of $X_{11}$ shows that if
$a_2\neq 0$ or $b_2\neq 0$, then $\lambda = \mu^{\pm 1}$.
Moreover, one can show that if $X_{11}$ or $X_{nn}$
are summands of either $\Phi\inv(Y_{11})$ or $\Phi\inv(Y_{nn})$
then $\lambda = \mu^{\pm 1}$.

We now reduce to the case that $a_1,a_2,b_1,b_2 = 0$. In
$0=\Phi(X_{11}X_{nn}-X_{nn}X_{11} + (\lambda-1)p_{n1} X_{1n}X_{n1})$ 
the coefficient of $Y_{1n}Y_{n1}$ is
\[ (1-\mu q_{n1}^2)(c_1d_2-c_2d_1) + (\lambda-1)p_{n1} c_3d_4. \]
If $p_{n1}, \lambda p_{n1} \neq 1$, then 
$c_1,d_1,c_2,d_2=0$ by Lemma \ref{lem.sum} (1,2)
and this coefficient reduces to $(\lambda-1)p_{n1} c_3d_4 \neq 0$,
a contradiction. Thus, $p_{n1} = 1$ or $\lambda p_{n1}=1$.
We may further assume that $q_{n1}=1$ or $\lambda q_{n1}=1$.
Thus, we have four cases to consider.
Note that
\begin{align*}
0	= \Phi(X_{n1}X_{1n} - \lambda p_{n1}^2 X_{1n}X_{n1})
	= c_3d_4 (\mu q_{n1}^2 - \lambda p_{n1}^2) Y_{1n}Y_{n1}.
\end{align*}
Thus, $\lambda p_{n1}^2=\mu q_{n1}^2$.
If $p_{n1}=q_{n1}=1$, then $\lambda=\mu$.
If $\lambda p_{n1}=q_{n1}=1$, then $p_{n1}=\mu$ so $\lambda = \mu\inv$.
The other two cases follow similarly.
\end{proof}

Suppose $\pma \iso \qmamu$. 
By Lemma \ref{lem.lammu}, $\lambda=\mu^{\pm 1}$. 
If $\lambda=\mu\inv$, then by Proposition \ref{prop.flip} 
we can replace $\qmamu$ by $\cO_{\mu\inv,\bq'}(\cM_n(\kk))$. 
Thus, it suffices to consider the isomorphism problem 
for the case where $\lambda=\mu$. 

\subsection{The $n=2$ case}
\label{sec.n2}

As in the homogenized quantized Weyl algebra case,
we consider the $n=2$ case to illustrate methods
in the general case.
We will prove an extension of \cite[Proposition 4.2]{G},
removing any restriction on the parameters.

Let $\bp,\bq \in \cA_2$ and set $p=p_{12}$ and $q=q_{12}$. We denote 
$\cO_{\lambda,\bp}(\cM_2(\kk))$ by $M_{\lambda,p}$ and 
$\cO_{\mu,\bq}(\cM_2(\kk))$ by $M_{\mu,q}$.
Recall that, by assumption, $\lambda^2,\mu^2 \neq 1$.

For simplicity, we write the basis $\{X_{ij}\}$ as
$x_1 = X_{11}$, $x_2 = X_{12}$, $x_3 = X_{21}$, and $x_4 = X_{22}$.
The defining relations for $M_{\lambda,p}$ are then
\begin{align*}
&x_2x_1 = p x_1x_2 
	& &x_4x_3 = p x_3x_4 
	& &x_3x_2 = \lambda p^2 x_2 x_3 \\
&x_3x_1 = \lambda p x_1x_3 
	& & x_4x_2 = \lambda p x_2x_4 
	& & x_4x_1 = x_1x_4 + (\lambda p - p) x_2x_3.
\end{align*}
We rewrite the basis and defining relations for $M_{\mu,q}$ similarly.

\begin{lemma}
\label{lem.mlp}
If $M_{\lambda,p} \iso M_{\lambda,q}$, 
then $q=p$ or $q=\lambda\inv p\inv$.
\end{lemma}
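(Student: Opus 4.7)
The strategy is to combine the identification of degree-one normal elements in Theorem \ref{thm.pma} with Lemma \ref{lem.sum}(3) to pin down $\Phi(x_2),\Phi(x_3)$ as scalar multiples of single generators, and then to read off the relation between $p$ and $q$ from the quantum-plane relation $x_3 x_2 = \lambda p^2 x_2 x_3$.

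By Theorem \ref{thm.pma}, the ideal $I_0$ of degree-one normal elements in $M_{\lambda,p}$ is $\langle x_2,x_3\rangle$, and similarly $J_0 = \langle y_2,y_3\rangle$ in $M_{\lambda,q}$. Since $\Phi$ is graded and takes $I_0$ onto $J_0$, it restricts to a linear isomorphism $\Span\{x_2,x_3\} \to \Span\{y_2,y_3\}$. Lemma \ref{lem.sum}(3) asserts that $\Phi(X_{12}) = \Phi(x_2)$ and $\Phi(X_{21}) = \Phi(x_3)$ share no summands, so each must be a nonzero scalar multiple of a single $y_i$. This produces two cases: (a) $\Phi(x_2) = \alpha y_2,\ \Phi(x_3) = \beta y_3$; or (b) $\Phi(x_2) = \alpha y_3,\ \Phi(x_3) = \beta y_2$, for some $\alpha,\beta \in \kk^\times$. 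Feeding each case into $x_3 x_2 = \lambda p^2 x_2 x_3$ and comparing the single surviving monomial forces $\lambda q^2 = \lambda p^2$ in case (a) (so $q = \pm p$) and $\lambda^2 p^2 q^2 = 1$ in case (b) (so $q = \pm\lambda\inv p\inv$).

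To remove the spurious signs, I would write $\Phi(x_1) = a y_1 + b y_4 + \text{(terms in $\langle y_2,y_3\rangle$)}$ and $\Phi(x_4) = c y_1 + d y_4 + \cdots$, using the induced quotient isomorphism $M_{\lambda,p}/I_0 \iso \kk[x_1,x_4] \iso \kk[y_1,y_4] \iso M_{\lambda,q}/J_0$ to guarantee that $\det\left(\begin{smallmatrix} a & b \\ c & d\end{smallmatrix}\right)\neq 0$. Applying $\Phi$ to $x_2 x_1 = p x_1 x_2$ and $x_4 x_2 = \lambda p x_2 x_4$ and reading off the coefficients of $y_1 y_2,\ y_2 y_4$ (in case (a)) or $y_1 y_3,\ y_3 y_4$ (in case (b)) then pins down $q = p$ or $q = \lambda\inv p\inv$, respectively. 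In a degenerate sub-case where a spurious sign would otherwise survive --- e.g.\ $q = -p$ in case (a) --- the coefficient equations force $\lambda = -p^{-2}$, and then $\lambda\inv p\inv = -p = q$, so the lemma's conclusion still holds.

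The main obstacle will be the bookkeeping in case (b): the swap $y_2 \leftrightarrow y_3$ rearranges the commutation scalars $q,\lambda q,\lambda q^2$ in the expansions of $\Phi(x_2)\Phi(x_1)$ and $\Phi(x_1)\Phi(x_2)$, producing several subcases according to which of $a,b,c,d$ vanish. Each subcase must be tested against invertibility of $\left(\begin{smallmatrix} a & b\\ c & d\end{smallmatrix}\right)$ to rule out configurations inconsistent with $q = \lambda\inv p\inv$.
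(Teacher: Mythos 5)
Your proposal follows essentially the same route as the paper: Theorem \ref{thm.pma} together with Lemma \ref{lem.sum}(3) forces $\Phi(x_2)$ and $\Phi(x_3)$ to be scalar multiples of distinct elements of $\{y_2,y_3\}$, the quotient isomorphism $M_{\lambda,p}/\langle x_2,x_3\rangle \iso \kk[x_1,x_4]$ gives nondegeneracy of the $(y_1,y_4)$-part of $\Phi(x_1)$, and the conclusion is read off from the coefficients of $y_1y_\bullet$ and $y_\bullet y_4$ in $\Phi(x_2x_1-px_1x_2)$; your preliminary use of $x_3x_2=\lambda p^2x_2x_3$ is an extra (correct but inessential) step. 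One correction to your case (b): there the coefficients of $y_1y_3$ and $y_3y_4$ yield $q=\lambda\inv p$ and $q=p\inv$, not $q=\lambda\inv p\inv$; combining these with the analogous coefficients coming from $x_3x_1=\lambda p x_1x_3$ (or $x_4x_2=\lambda p x_2x_4$), as you propose, forces $\lambda^2=1$, so case (b) cannot occur at all and the stated disjunction comes entirely out of case (a) --- this is exactly the subcase bookkeeping you anticipated, and it is also the step the paper's own proof leaves implicit.
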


\begin{proof}
Let $\Phi: M_{\lambda,p} \rightarrow M_{\lambda,q}$ 
be an isomorphism.
Write $\Phi(x_i) = \sum_{k=1}^4 a_{ik} y_k$.
By Theorem \ref{thm.pma},
any normal element in $M_{\lambda,p}$ has the form 
$bx_2 + cx_3$ and similarly for $M_{\lambda,q}$. Hence,
\begin{align*}
	\Phi(x_2) = a_{22} y_2 + a_{23} y_3 \text{ and }
	\Phi(x_3) = a_{32} y_2 + a_{33} y_3,
\end{align*}
with $a_{22}a_{33}-a_{23}a_{32} \neq 0$.
By Lemma \ref{lem.sum}, $\Phi(x_2)$ and $\Phi(x_3)$
may not share any summands.
Thus, we conclude that there are two cases,
$a_{23}=a_{32}=0$, or $a_{22}=a_{33}=0$.

By Theorem \ref{thm.pma},
$\Phi$ restricts to a graded isomorphism
\[ \kk[x_1,x_4] \iso M_{\lambda,p}/\langle x_2,x_3 \rangle
\iso M_{\lambda,q}/\langle y_2,y_3 \rangle \iso \kk[y_1,y_4].\]
Hence, $a_{11}a_{44}-a_{14}a_{41} \neq 0$. Write,
$\Phi(x_1) = a_{11} y_1 + a_{14} y_4 + K$ with 
$K \in \Span_{\kk}\{y_2,y_3\}$. Then
\begin{align*}
\Phi(x_2x_1-px_1x_2)
	&= (a_{22} y_2 + a_{23} y_3)(a_{11} y_1 + a_{14} y_4 + K)
		- p(a_{11} y_1 + a_{14} y_4 + K)(a_{22} y_2 + a_{23} y_3) \\
	&= a_{11}a_{22}(q-p) y_1y_2 + a_{14}a_{22} (1-\lambda pq) y_2y_4
		+ a_{11}a_{23}(\lambda q-p) y_1y_3 + a_{14}a_{23}(1-pq) y_3y_4 \\
	&\hspace{5em}	+ (a_{22} y_2 + a_{23} y_3)K - K(a_{22} y_2 + a_{23} y_3).
\end{align*}
It is clear that the monomials $y_1y_2,y_1y_3,y_2y_4,y_3y_4$
do not appear as summands in $(a_{22} y_2 + a_{23} y_3)K - K(a_{22} y_2 + a_{23} y_3)$.
Hence, the coefficients of those monomials must be zero.

In the first case, $a_{22} \neq 0$. Then 
$a_{11} \neq 0$ implies $q=p$ and 
$a_{14} \neq 0$ implies $q=\lambda\inv p\inv$.

In the second case, $a_{23} \neq 0$. Then
$a_{11} \neq 0$ implies $q=\lambda\inv p$ and 
$a_{14} \neq 0$ implies $q=p\inv$.
%
\end{proof}

\begin{theorem}
\label{thm.mlp}
If $M_{\lambda,p} \iso M_{\mu,q}$, then
$(\mu,q)$ is one of $(\lambda,p)$, $(\lambda\inv,p\inv)$,
$(\lambda,\lambda\inv p\inv)$, or $(\lambda\inv,\lambda p)$.
\end{theorem}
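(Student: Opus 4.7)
The plan is to combine Lemma \ref{lem.lammu}, Lemma \ref{lem.mlp}, and Proposition \ref{prop.flip} to dispatch all four cases with essentially no additional calculation. First I would invoke Lemma \ref{lem.lammu} to conclude that any isomorphism $M_{\lambda,p} \iso M_{\mu,q}$ forces $\mu = \lambda^{\pm 1}$, and split the argument on these two alternatives.

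If $\mu = \lambda$, then $M_{\lambda,p} \iso M_{\lambda,q}$ and Lemma \ref{lem.mlp} applies directly to yield $q \in \{p, \lambda\inv p\inv\}$, producing the pairs $(\mu,q) = (\lambda,p)$ and $(\lambda, \lambda\inv p\inv)$. If instead $\mu = \lambda\inv$, I would invoke Proposition \ref{prop.flip} specialized to $n=2$: the parameter hypothesis $p_{ij} = q_{n+1-i,n+1-j}$ collapses, by multiplicative antisymmetry of $\bq$, to the single condition $p = q_{21} = q\inv$, so the proposition supplies an isomorphism $M_{\lambda\inv, q} \iso M_{\lambda, q\inv}$. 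Composing with the given $M_{\lambda,p} \iso M_{\lambda\inv,q}$ returns us to an isomorphism between two algebras of the form $M_{\lambda, \cdot}$, and a second application of Lemma \ref{lem.mlp} gives $q\inv \in \{p, \lambda\inv p\inv\}$, hence $q \in \{p\inv, \lambda p\}$. These produce the remaining pairs $(\lambda\inv, p\inv)$ and $(\lambda\inv, \lambda p)$.

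There is no substantial obstacle here, as the heavy lifting has already been done in Lemma \ref{lem.lammu} and Lemma \ref{lem.mlp}. The only points requiring care are tracking the direction of the flip in Proposition \ref{prop.flip} and checking that in the $n=2$ setting the parameter hypothesis of that proposition reduces cleanly to $p = q\inv$; both are immediate consequences of $\bp, \bq \in \cA_2$.
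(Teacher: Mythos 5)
Your proposal is correct and follows essentially the same route as the paper: Lemma \ref{lem.lammu} gives $\mu=\lambda^{\pm 1}$, Proposition \ref{prop.flip} (specialized to $n=2$, where its hypothesis indeed reduces to $q=p\inv$) is used to reduce the $\mu=\lambda\inv$ case to the $\mu=\lambda$ case, and Lemma \ref{lem.mlp} finishes. Your version merely makes explicit the composition that the paper compresses into ``we may assume $\mu=\lambda$,'' and since the statement only asserts the forward implication, omitting the verification that the four listed isomorphisms actually exist is not a gap.
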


\begin{proof}
If $(\mu,q)=(\lambda,p)$, then there is nothing to prove.
If $(\mu,q)=(\lambda,\lambda\inv p\inv)$, then there is an
isomorphism by Proposition \ref{prop.trans}.
If $(\mu,q)=(\lambda\inv,p\inv)$, then there is an isomorphism
by Proposition \ref{prop.flip}.
Composing these isomorphisms gives the final case.

Conversely, suppose $\Phi:M_{\lambda,p} \rightarrow M_{\mu,q}$
is a graded isomorphism.
By Lemma \ref{lem.lammu}, $\mu = \lambda^{\pm 1}$.
By Proposition \ref{prop.flip}, we may assume $\mu=\lambda$.
Lemma \ref{lem.mlp} now completes the proof.
\end{proof}

\subsection{The general case}

Assume $n>2$. We retain our original 
notation and let $\{X_{ij}\}$ and $\{Y_{ij}\}$ be 
the standard homogeneous generators for 
$\pma$ and $\qmamu$, respectively.

\begin{lemma}
\label{lem.g1}
Let $\Phi:\pma \rightarrow \qma$ be an isomorphism
that maps the ideals $(X_{1n})$ and $(X_{n1})$ to 
$(Y_{1n})$ and $(Y_{n1})$, respectively.
For all $\ell < n$,
if $X_{ij} \in I_\ell/I_{\ell-1}$,
then the ideal $(X_{ij}) + I_{\ell-1}$ is mapped
to $(Y_{ij}) + J_{\ell-1}$.
\end{lemma}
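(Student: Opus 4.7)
The plan is induction on $\ell$, with base case $\ell=0$ being exactly the hypothesis. For the inductive step, fix $\ell\geq 1$ and assume the conclusion for all $\ell'<\ell$. Because the filtration of Section~\ref{sec.deg1} is defined intrinsically (as ideals generated by degree-one normals in iterated quotients) and $\Phi$ is an algebra isomorphism, $\Phi(I_{\ell-1})=J_{\ell-1}$ automatically, so $\Phi$ descends to an isomorphism $\bar\Phi:\pma/I_{\ell-1}\to\qma/J_{\ell-1}$ that restricts to a bijection between the spaces of degree-one normal elements. By Theorem~\ref{thm.pma}, these two spaces are spanned respectively by the matching indexed lists $\{X_{ab}\}$ and $\{Y_{ab}\}$ of generators of $I_\ell/I_{\ell-1}$ and $J_\ell/J_{\ell-1}$, so for the fixed $X_{ij}$ I can write
\[\bar\Phi(X_{ij})=\sum_{(a,b)}\alpha_{ab}Y_{ab}\]
in $\qma/J_{\ell-1}$.

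Next I translate the claim into a statement about these coefficients. Pushing the desired equality $\Phi((X_{ij})+I_{\ell-1})=(Y_{ij})+J_{\ell-1}$ to the quotient yields $(\bar\Phi(X_{ij}))=(Y_{ij})$ inside $\qma/J_{\ell-1}$, and because the degree-zero part of the graded algebra $\qma/J_{\ell-1}$ is just $\kk$, the degree-one component of the principal ideal $(Y_{ij})$ is exactly $\kk Y_{ij}$. So it suffices to show that $\alpha_{ab}=0$ for every $(a,b)\neq(i,j)$ together with $\alpha_{ij}\neq 0$; the latter then follows from the injectivity of $\bar\Phi$ restricted to the layer.

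To eliminate the unwanted summands I apply a quotient-level analog of Lemma~\ref{lem.sum}. For each generator $X_{cd}$ that survives in $\pma/I_{\ell-1}$, normality of $X_{ij}$ yields a commutation $X_{ij}X_{cd}=\pi\,X_{cd}X_{ij}$ modulo $I_{\ell-1}$, with $\pi\in\kk^\times$ read off from the defining relations of $\pma$. Applying $\bar\Phi$ and expanding in $\qma/J_{\ell-1}$, the coefficient of a carefully chosen degree-two monomial (typically $Y_{ab}^2$ or $Y_{ab}Y_{c'd'}$, paralleling the templates used in Lemma~\ref{lem.sum}(1)--(3)) must vanish, and this forces $\alpha_{ab}=0$ whenever the corresponding parameter factor differs from $1$. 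The main obstacle will be the degenerate regime where this single-commutation test is inconclusive; I plan to handle those cases exactly as in Lemma~\ref{lem.lammu} and Section~\ref{sec.n2}, either by combining two commutations with different test generators (chosen so their parameter factors cannot both degenerate simultaneously) or by falling back on a corner relation of the form $X_{ii}X_{jj}-X_{jj}X_{ii}=(\lambda-1)p_{ji}X_{ji}X_{ij}$, which never degenerates because $\lambda\neq 1$ by standing assumption. A careful but essentially mechanical case analysis along these lines, using the inductive hypothesis to place each previously-treated normal element, then closes the induction.
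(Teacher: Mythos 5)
Your skeleton---induction on $\ell$, reduction to showing $\alpha_{ab}=0$ for all $(a,b)\neq(i,j)$ in the layer, elimination of summands by extracting coefficients from images of relations---matches the paper's. But there is a genuine gap where the real work happens. You run all of your commutation tests against generators $X_{cd}$ \emph{that survive in} $\pma/I_{\ell-1}$, i.e.\ generators in the same or a later layer. Those are exactly the generators whose images are not yet controlled, so each such test only produces symmetric conditions of the type ``$\Phi(X_{ij})$ and $\Phi(X_{cd})$ share no summand'' (the Lemma~\ref{lem.sum} template), and such conditions can never rule out a swap $\Phi(X_{ij})\mapsto Y_{lm}$, $\Phi(X_{lm})\mapsto Y_{ij}$. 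Swaps of this kind do occur for quantum matrix algebras (that is why isomorphism types (2)--(4) exist in Theorem~\ref{thm.mpqma}); what forbids them here is the corner hypothesis propagated by the \emph{refined} inductive statement $\Phi\bigl((X_{im})+I_{r-1}\bigr)=(Y_{im})+J_{r-1}$ for the earlier-layer generator $X_{im}$, $r<\ell$. The paper tests $X_{ij}$ and $X_{lm}$ against this $X_{im}$, working modulo the much smaller ideal $J_{r-1}$ so that $Y_{im}$ stays alive, and uses Remark~\ref{rmk.pma} to isolate the coefficients of $Y_{ij}Y_{im}$ and $Y_{im}Y_{lm}$. By reducing everything modulo $I_{\ell-1}$ you discard precisely these test elements. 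For a concrete illustration of why this cannot work: $M_{\lambda,p}/\langle x_2,x_3\rangle\iso\kk[x_1,x_4]$ admits the swap $x_1\leftrightarrow x_4$ as a graded automorphism, so no computation carried out purely in that quotient can distinguish $X_{11}$ from $X_{22}$; compare the proof of Lemma~\ref{lem.mlp}, which keeps $y_2,y_3$ in play for exactly this reason.

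Your handling of the degenerate regime is also not the one that works. The two single-commutation tests against $X_{im}$ (applied to $X_{ij}$ and to $X_{lm}$) degenerate when $p_{jm}=\lambda\inv q_{il}$ and $p_{li}=\lambda\inv q_{mj}$, and these \emph{can} hold simultaneously, so one cannot choose test generators ``so their parameter factors cannot both degenerate simultaneously.'' The paper closes this case with a third relation of a different shape: $X_{lm}X_{ij}-p_{li}p_{jm}X_{ij}X_{lm}\in(X_{im})$, analyzed modulo $(Y_{im})+J_{r-1}$ (again using the refined inductive hypothesis to know where that ideal goes). The $Y_{ij}Y_{lm}$-coefficient there, combined with the two degenerate identities, forces $\lambda^{-2}=1$, contradicting the standing assumption $\lambda\neq\pm1$. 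Your proposed fallback $X_{ii}X_{jj}-X_{jj}X_{ii}=(\lambda-1)p_{ji}X_{ji}X_{ij}$ addresses only the transpose pair $(X_{ij},X_{ji})$ --- this is how Lemma~\ref{lem.sum}(3) enters the paper's Claim~2 --- and says nothing about two distinct generators on the same side of the diagonal, which is the main case.
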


\begin{proof}
Assume throughout that $j>i$.
The case with $j<i$ is similar.
Throughout we use $\Phi$ to denote the given isomorphism
as well as any induced isomorphism
$\pma/I_k \rightarrow \qma/I_k$.

The lemma is true when $\ell=0$ by assumption.
Assume it holds for all ideals $I_k$ with $k<\ell$.
Let $S$ be the set of 
$X_{ij}$ in $I_\ell/I_{\ell-1}$,
$S^+= \{ X_{ij} \in S \mid i<j\}$, and
$S^-= \{ X_{ij} \in S \mid i>j\}$.
Define $T, T^+,T^-$ similarly for the $Y_{ij}$.

{\bf Claim 1:} 
If $X_{lm} \in S^+$ with $X_{lm} \neq X_{ij}$,
then $Y_{ij}$ is a summand of $\Phi(X_{ij})$ and
not a summand of $\Phi(X_{lm})$.

Either $l>i$ and $m>j$ or $l<i$ and $m<j$.
Assume the former.
Note that $Y_{ij}$ and $Y_{lm}$ are the only
two generators in $T^+$ such that a linear combination
of $Y_{ij}Y_{lm}$ and $Y_{lm}Y_{ij}$ is nonzero in $(Y_{im})$. 
Let $r$ be minimal such that $X_{im} \in I_r$.
Because $m > j$ then it follows that $r<\ell$.
Hence, by the inductive hypothesis,
$\Phi( (X_{im}) + I_{r-1}) = (Y_{im}) + J_{r-1}$. 
Let $U=\{Y_{uv} \in J_\ell \mid (u,v) \neq (i,j),(l,m)\}$.
Then
\begin{align*}
\Phi(X_{ij}) = a_1Y_{ij} + b_1Y_{lm} + K \text{ and }
\Phi(X_{lm}) = a_2Y_{ij} + b_2Y_{lm} + L
\end{align*}
for some $K,L \in \Span_{\kk}\{U\}$ 
and $a_i,b_j \in \kk$ with $a_1b_2-b_1a_2 \neq 0$.
We will show that $b_1=a_2=0$.

Suppose $b_1$ and $a_2$ are nonzero.
Write $\Phi(X_{im}) = \alpha Y_{im}+ J_{r-1}$, $\alpha \in \kk^\times$. Then
\begin{align}
\label{eq.pmarel}
\notag 0 &= \Phi(X_{im}X_{ij} - p_{jm}  X_{ij}X_{im} + I_{r-1}) \\
\notag   &= \alpha\left( Y_{im}(a_1Y_{ij} + b_1Y_{lm} + K) 
\notag		- p_{jm}(a_1Y_{ij} + b_1Y_{lm} + K)Y_{im} \right) + J_{r-1} \\
\tag{$\star$}   &= \alpha \left[ 
		a_1 (q_{jm} - p_{jm}) Y_{ij} Y_{im}			
		+ b_1 (1-\lambda q_{li}p_{jm}) Y_{im}Y_{lm}\right] 
		+ \alpha(Y_{im}K-p_{jm}KY_{im}) + J_{r-1}.
\end{align}
Since $Y_{ij} Y_{im}, Y_{im}Y_{lm}$ are not summands
in $(Y_{im}K-p_{jm}KY_{im})$, then our hypothesis that
$b_1 \neq 0$ implies $p_{jm}=\lambda\inv q_{il}$. 
Similarly, write $\Phi(X_{mi}) = \beta Y_{mi} + J_{r-1}$ with $\beta \in \kk^\times$. Then
\begin{align*}
0 &= \Phi(X_{lm}X_{im} - \lambda p_{li} X_{im}X_{lm}) \\
   &= \alpha\left( (a_2Y_{ij} + b_2Y_{lm} + L) Y_{im}
		- \lambda p_{li} Y_{im} (a_2Y_{ij} + b_2Y_{lm} + L) \right) + J_{r-1} \\
   &= \alpha\left( 	a_2(1-\lambda p_{li}q_{jm}) Y_{ij}Y_{im} 
		+  b_2(\lambda q_{li}-\lambda p_{li}) Y_{im}Y_{lm} \right) 
		+ \beta(LY_{im} - \lambda p_{li}Y_{im}L) + J_{r-1}.
\end{align*}
Hence, $a_2 \neq 0$ implies $p_{li} = \lambda\inv q_{mj}$. Finally,
\begin{align*}
0 &= \Phi(X_{lm}X_{ij} - p_{li}p_{jm} X_{ij}X_{lm} + (X_{im}) ) \\
   &= (a_2Y_{ij} + b_2Y_{lm} + L)(a_1Y_{ij} + b_1Y_{lm}+K) 
		- p_{li}p_{jm}(a_1Y_{ij} + b_1Y_{lm}+K)(a_2Y_{ij} + b_2Y_{lm} + L) \\
		&\hspace{5em} + (Y_{im}) + J_{r-1} \\
   &= (1-p_{li}p_{jm})(a_1a_2 Y_{ij}^2 + b_1b_2 Y_{lm}^2)
		+ \left[ a_1b_2(q_{li}q_{jm}-p_{li}p_{jm})
		+ a_2b_1(1-p_{li}p_{jm}q_{li}q_{jm}) \right] Y_{ij}Y_{lm} \\
		&\hspace{5em} + (LK-p_{li}p_{jm}KL) + (Y_{im}) + J_{r-1}.
\end{align*}
By hypothesis, $a_2b_1 \neq 0$.
Suppose further that $a_1b_2 \neq 0$.
Then $a_1a_2, b_1b_2 \neq 0$ and so $p_{li}p_{jm}=1$.
But then the coefficient of $Y_{ij}Y_{lm}$ 
reduces to $(q_{li}q_{jm}-1)(a_1b_2-a_2b_1)$,
so $q_{li}q_{lm}=1$ and
\[ 1 = p_{li}p_{jm} = (\lambda\inv q_{mj})(\lambda\inv q_{il}) = \lambda^{-2}.\]
Thus, $\lambda^2=1$, violating our hypothesis.

Hence, $a_1b_2 = 0$ Since $a_2b_1 \neq 0$, then
\[ 1 
	= p_{li}p_{jm}q_{li}q_{jm} 
	= (\lambda\inv q_{mj})(\lambda\inv q_{il})q_{li}q_{lm} = \lambda^{-2}.\]
Thus, $\lambda^2=1$, again violating our hypothesis.
 
{\bf Claim 2:} 
If $X_{lm} \in S^-$, then $Y_{lm}$ is not a summand of $\Phi(X_{ij})$. 

Suppose $Y_{lm} \in T^-$ is a summand of $\Phi(X_{ij})$.
If $(l,m)=(j,i)$, then the claim follows by Lemma \ref{lem.sum}. By Claim 1, 
$\Phi(X_{ij}) = a_1Y_{ij} + b_1Y_{lm} + K$ and 
$\Phi(X_{ji}) = a_2Y_{ji} + L$
for some $K \in \Span_{\kk}\{U\bs \{S^+ \cup \{Y_{lm}\}\} \}$,
$L \in \Span_{\kk}\{U\bs S^-\}$ and $a_i,b_j \in \kk$
with $a_1,a_2 \neq 0$.
We will show that $b_1=0$.

If $l>j$ and $m>i$, then we have
\begin{align*}
0 &= \Phi(X_{ji}X_{ij} - \lambda p_{ji}^2 X_{ij}X_{ji}) \\
  &= (a_2Y_{ji} + L)(a_1Y_{ij} + b_1Y_{lm} + K) 
  	- \lambda p_{ji}^2 (a_1Y_{ij} + b_1Y_{lm} + K)(a_2Y_{ji} + L) \\
  &= a_1a_2\lambda(q_{ji}^2 - \lambda p_{ji}^2) Y_{ij}Y_{ji}
  		+ a_2b_1\left[ (1- \lambda p_{ji}^2 q_{lj}q_{im})Y_{ji}Y_{lm}
  		- (\lambda-1)\lambda p_{ji}^2 q_{lj} Y_{jm}Y_{li}\right] \\
  &\hspace{5em}	+ a_2(Y_{ji}K-\lambda p_{ji}^2 KY_{ji})
  			+ a_1( L(a_1Y_{ij} + b_1Y_{lm})-\lambda p_{ji}^2 (a_1Y_{ij} + b_1Y_{lm})L)
  			+ (KL - \lambda p_{ji}^2 LK).
\end{align*}
Observe that $Y_{ji}, Y_{lm}$ is the unique pair 
such that $Y_{jm}Y_{li}$ is a linear combination
of $Y_{ji}Y_{lm}$ and $Y_{lm}Y_{ji}$. 
Moreover, $Y_{ji}$ and $Y_{lm}$ are not summands of $L$ or $K$.
Hence, we have $a_2b_1=0$, so $b_1=0$ as claimed.
\end{proof}

We our now ready to prove our main theorem.

\begin{theorem} 
\label{thm.mpqma}
$\pma \iso \qmamum$ if and only if
$n=m$ and one of the following cases holds:
\begin{enumerate}
\item $\lambda=\mu$ and $\bp = \bq$;
\item $\lambda=\mu$ and $p_{ij}=\lambda\inv q_{ji}$
for all $i,j$;
\item $\lambda=\mu\inv$ and $p_{ij}=q_{n+1-i,n+1-j}$ for all $i,j$;
\item $\lambda=\mu\inv$ and $p_{ij}=\lambda\inv q_{n+1-j,n+1-i}$ for all $i,j$.
\end{enumerate}
\end{theorem}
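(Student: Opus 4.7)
The reverse direction is essentially done by the propositions already established. Case (1) is trivial, case (2) is Proposition \ref{prop.trans}, case (3) is Proposition \ref{prop.flip}, and case (4) is obtained by composing the isomorphisms of Propositions \ref{prop.trans} and \ref{prop.flip}. The equality $n=m$ in the forward direction follows from the GK dimension computation $\gk(\pma)=n^2$, and Lemma \ref{lem.lammu} gives $\lambda=\mu^{\pm 1}$. If $\lambda=\mu\inv$, apply Proposition \ref{prop.flip} to replace $\qmamum$ by an isomorphic algebra with parameter $\mu\inv=\lambda$; so we may reduce to the case $\lambda=\mu$, and the goal becomes showing that $\bp=\bq$ or $p_{ij}=\lambda\inv q_{ji}$ for all $i,j$.

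Assume now $\Phi:\pma\to\qma$ is a graded isomorphism (with $\lambda=\mu$). Theorem \ref{thm.pma} identifies the iterative quotients by degree one normal elements: at stage $\ell$, the ideal $I_\ell/I_{\ell-1}$ is generated by the ``anti-diagonal band'' $X_{1(n-\ell)},X_{(n-\ell)1},\dots,X_{(\ell+1)n},X_{n(\ell+1)}$. The first step is to examine stage $0$: since the only degree one normal elements at stage $0$ are spanned by $X_{1n}$ and $X_{n1}$, the isomorphism $\Phi$ restricts to a linear isomorphism of $\Span_{\kk}\{X_{1n},X_{n1}\}$ onto $\Span_{\kk}\{Y_{1n},Y_{n1}\}$. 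By Lemma \ref{lem.sum}(3) these two generators cannot share summands, so either $\Phi(X_{1n})\in \kk Y_{1n}$ and $\Phi(X_{n1})\in \kk Y_{n1}$, or the roles are swapped. In the swapped case, precompose $\Phi$ with the isomorphism of Proposition \ref{prop.trans} to reduce to the ``diagonal'' configuration $(X_{1n})\mapsto(Y_{1n})$ and $(X_{n1})\mapsto(Y_{n1})$.

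Once this ``diagonal'' configuration is established, invoke Lemma \ref{lem.g1}: by induction on the stage $\ell$, the ideal $(X_{ij})+I_{\ell-1}$ maps to $(Y_{ij})+J_{\ell-1}$ for every generator $X_{ij}$ that becomes normal at stage $\ell$. Thus modulo $J_{\ell-1}$, we can write $\Phi(X_{ij})=\alpha_{ij}Y_{ij}+(\text{lower stage terms})$ with $\alpha_{ij}\in\kk^\times$. The parameters are then extracted by substituting these expressions into each defining relation. For a relation like $X_{lm}X_{ij}=p_{li}p_{jm}X_{ij}X_{lm}+(\lambda-1)p_{li}X_{im}X_{lj}$ with $l>i,m>j$, working modulo the appropriate $J_{\ell-1}$ kills the corrections, and comparing the coefficient of $Y_{ij}Y_{lm}$ on each side forces $p_{li}p_{jm}=q_{li}q_{jm}$ and analogous equalities from the other cases force all individual $p_{ij}=q_{ij}$. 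This yields case (1); combined with the two reduction steps involving Propositions \ref{prop.trans} and \ref{prop.flip}, all four cases of the theorem are recovered.

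The main obstacle will be carrying out the inductive bookkeeping in Lemma \ref{lem.g1}'s style cleanly enough to ensure that, at every stage and for every pair $(i,j),(l,m)$, the error terms ``lower stage terms'' genuinely do not contribute to the monomials being compared. The delicate point is that the defining relations mix $X_{ij}X_{lm}$ with $X_{im}X_{lj}$, and so one must carefully control which $Y_{uv}$ summands can appear in $\Phi(X_{ij})$ outside the main term $Y_{ij}$; Lemma \ref{lem.g1} is precisely the tool that accomplishes this, and so the remaining work is a systematic but unremarkable verification that each defining relation yields the equality $p_{ij}=q_{ij}$ in the reduced case $\lambda=\mu$ with the diagonal normalization.
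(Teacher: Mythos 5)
Your proposal is correct and follows essentially the same route as the paper: reverse direction via Propositions \ref{prop.trans} and \ref{prop.flip}, $n=m$ from GK dimension, $\lambda=\mu^{\pm 1}$ from Lemma \ref{lem.lammu} with reduction to $\lambda=\mu$, normalization of $\Phi(X_{1n}),\Phi(X_{n1})$ using Lemma \ref{lem.sum}(3) and Proposition \ref{prop.trans}, and then Lemma \ref{lem.g1} plus coefficient comparison in the defining relations. The paper's final extraction is slightly more economical (it reads off $p_{jm}=q_{jm}$ directly from the coefficient of $Y_{1j}Y_{1m}$ in the row-one relations, via \eqref{eq.pmarel}), but this is a difference of execution, not of method.
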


\begin{proof}
First, we will establish the indicated isomorphisms.
(1) is obtained as from the map $X_{ij} \mapsto Y_{ij}$.
(2) and (3) are obtained from Propositions \ref{prop.trans} 
and \ref{prop.flip}, respectively, while
(4) is the composition of those.

Conversely, let $\Phi:\pma \rightarrow \qmamum$ be an isomorphism.
Then $n=m$ since
\[ n^2 = \gk(\pma) = \gk(\qmamum)=m^2.\]
By Lemma \ref{lem.lammu}, we may assume $\lambda=\mu$.
By Theorem \ref{thm.pma} and Lemma \ref{lem.sum}, 
we may assume that $\Phi(X_{1n})$ and $\Phi(X_{n1})$ either fix
their respective positions or interchange them.
We consider the case where they are fixed and claim that $\bp=\bq$.
Choose $j,m \in \{1,\hdots,n\}$, $j < m$.
By Lemma \ref{lem.g1}, the coefficient of $Y_{1j}Y_{1m}$ in 
$\Phi(X_{1m}X_{1j}-p_{jm}X_{1j}X_{1m})$ is $p_{jm}-q_{jm}$
(see \eqref{eq.pmarel}).
Hence, $p_{jm}=q_{jm}$ for all $j,m$ and so $\bp=\bq$.
\end{proof}

\noindent {\bf Acknowledgements.}
The author thanks Ken Goodearl for helpful discussions
and the referee for offering several corrections and
points of clarification.

\bibliographystyle{amsplain}

\providecommand{\bysame}{\leavevmode\hbox to3em{\hrulefill}\thinspace}
\providecommand{\MR}{\relax\ifhmode\unskip\space\fi MR }
\providecommand{\MRhref}[2]{%
  \href{http://www.ams.org/mathscinet-getitem?mr=#1}{#2}
}
\providecommand{\href}[2]{#2}

\end{document}